\documentclass[11pt]{article}
\usepackage{amsfonts}
\usepackage{amssymb,latexsym}
\usepackage{amsmath,amscd}
\usepackage{theorem}
\usepackage{color}
\usepackage{combelow}
\usepackage[all]{xy}
\usepackage[pdftitle={Paper}, pdfauthor={Alejandro Cabrera}]{hyperref}
\usepackage{enumitem}
\usepackage{tensor}
\usepackage[affil-it]{authblk} 

\setcounter{MaxMatrixCols}{10}
\setlength{\topmargin}{1.2cm}
\setlength{\parindent}{10pt}
\setlength{\textwidth}{16cm}
\setlength{\textheight}{22.5cm}
\setlength{\hoffset}{-1.5cm}
\setlength{\voffset}{-2cm}
\newtheorem{lemma} {Lemma} [section]
\newtheorem{proposition} [lemma] {Proposition}

\newtheorem{convention} [lemma] {Convention}
\newtheorem{theorem} [lemma] {Theorem}
\newtheorem{corollary} [lemma] {Corollary}
\newtheorem{definition}[lemma] {Definition}

\theorembodyfont{\rm}
\newtheorem{example}[lemma] {Example}
\newtheorem{remark}[lemma]{Remark}
\newenvironment{proof}{{\sc Proof:}}{{\hspace*{\fill} $\square$\\}}

\numberwithin{}{}

\newcommand{\X}{\ensuremath{\mathfrak{X}}}
\newcommand{\g}{\mathfrak{g}}

\newcommand{\Ad}{\text{\rm Ad}\,}       
\newcommand{\ad}{\text{\rm ad}\,}       

\newcommand{\dto}{\dashrightarrow}
\newcommand{\rmap}{\to}

\def\bea{\begin{eqnarray}}
\def\eea{\end{eqnarray}}
\def\bl{\begin{lemma}}
\def\el{\end{lemma}}
\def\br{\begin{remark}}
\def\er{\end{remark}}

\def\rr{\rightrightarrows}

\def\R{\mathbb{R}}

\def\s{\sigma}
\def\t{\tau}
\def\e{\epsilon}

\def\br{\bar{\rho}}

\def\hexp{\widehat{\exp}_V}

\def\VE{\mathfrak{ve}}
\def\hg{\hat{\gamma}}

\def\comas{,..,}

\begin{document}

\title{On local integration of Lie brackets}
\author{Alejandro Cabrera}
\affil{Departamento de Matem\'atica Aplicada, Instituto de Matem\'atica\\
Universidade Federal do Rio de Janeiro\\
 Caixa Postal 68530, Rio de Janeiro RJ 21941-909, Brasil\\ alejandro@matematica.ufrj.br}

\author{Ioan M\u{a}rcu\cb{t}}
\affil{Radboud University Nijmegen, IMAPP\\ 
6500 GL, Nijmegen, The Netherlands\\
i.marcut@math.ru.nl}
 
\author{Mar\'ia Amelia Salazar}
\affil{Instituto de Matem\'atica e Estat\'istica, GMA\\ 
Universidade Federal Fluminense\\ 
Rua Professor Marcos Waldemar de Freitas Reis s/n, Gragoat\'a, Niter\'oi, Rio de Janeiro, Brasil, 24.210-201\\ mariasalazar@id.uff.br}

\maketitle

\begin{abstract}
We give a direct, explicit and self-contained construction of a local Lie groupoid integrating a given Lie algebroid which only depends on the choice of a spray vector field lifting the underlying anchor map. This construction leads to a complete account of local Lie theory and, in particular, to a finite-dimensional proof of the fact that the category of germs of local Lie groupoids is equivalent to that of Lie algebroids.
\end{abstract}

\tableofcontents

\section{Introduction}\label{sec:intro}

It is not entirely true that \emph{``Lie algebroids are to Lie groupoids as Lie algebras are to Lie groups''} (as a general reference to Lie algebroid and Lie groupoid theory, see \cite{Mac}). The main difference is that the Lie functor, which differentiates Lie groupoids to Lie algebroids, is not (essentially) surjective, i.e.\ there are Lie algebroids which do not admit any integration by a smooth Lie groupoid (see \cite{AlmMol,CF1}). However, the non-integrability issue is of a global nature, and it can be overcome by considering \textbf{local Lie groupoids} instead (i.e.\ the structure maps are defined only locally, around the units - see Subsection \ref{subsection:defi_loc_lie}). In fact, the following holds:
\begin{theorem}\label{theo:equiv_cats}
Consider the category $\mathrm{loc Lie Grpd}$, whose objects are local Lie groupoids and whose morphisms are germs around the unit section of Lie groupoid morphisms, and the category $\mathrm{Lie Algd}$ of Lie algebroids. The Lie functor $\mathrm{Lie}:\mathrm{loc Lie Grpd}\rmap \mathrm{Lie Algd}$ induces an equivalence between these categories.
\end{theorem}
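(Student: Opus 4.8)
The plan is to verify that the Lie functor is essentially surjective and fully faithful; by the standard characterization this yields the asserted equivalence. Essential surjectivity is exactly what the explicit construction at the heart of the paper provides: given a Lie algebroid $A$ together with a connection, the associated spray groupoid is a local Lie groupoid $\G$ with $\mathrm{Lie}(\G) \cong A$. One must then check that a different choice of connection yields a canonically isomorphic local groupoid (as a germ around the units), so that the object represented by $\G$ is well defined up to isomorphism in $\mathrm{loc\,Lie\,Grpd}$; this should follow from comparing the two exponential-type coordinate charts, which agree to first order along the unit section.

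For full faithfulness I would pass to graphs. A Lie algebroid morphism $\phi \colon A_1 \to A_2$ is the same datum as a subalgebroid $\mathrm{graph}(\phi) \subseteq A_1 \times A_2$ projecting isomorphically onto the first factor, and likewise a germ of a local groupoid morphism $F \colon \G_1 \to \G_2$ is the same datum as a germ along the units of a local subgroupoid of $\G_1 \times \G_2$ projecting diffeomorphically onto $\G_1$. Thus full faithfulness reduces to the following integration statement: passing to Lie algebroids gives a bijection between germs of local subgroupoids of $\G_1 \times \G_2$ and subalgebroids of $A_1 \times A_2$, under which transversality to the first projection on one side corresponds to transversality on the other. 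This is a local version of Lie's second theorem, and it is where the spray construction does the real work.

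To integrate a subalgebroid $B \subseteq A$ I would use the spray groupoid of $A$ directly: choosing the connection (equivalently the spray) compatibly with $B$, the exponential image of a neighborhood of the zero section of $B$ is a germ of local subgroupoid $\G_B \subseteq \G$ with $\mathrm{Lie}(\G_B) = B$. Uniqueness — and hence faithfulness — follows because every element of $\G$ near a unit is represented by a single exponential $\exp(a)$ with $a \in A$ small, so any local subgroupoid tangent to $B$ along the units must coincide with $\G_B$ as a germ; applied to graphs this shows a germ of morphism is determined by its derivative $\phi$. Existence — and hence fullness — then amounts to checking that $\mathrm{graph}(\phi)$, viewed as a subalgebroid of the product, integrates to a germ of local subgroupoid transverse to the first projection, namely $\G_{\mathrm{graph}(\phi)}$, which yields the sought morphism $F$.

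I expect the main obstacle to be the existence half of full faithfulness, i.e.\ showing that the exponential image of a subalgebroid is genuinely closed under the partially-defined multiplication near the units, so that it really is a local subgroupoid and not merely an immersed submanifold tangent to $B$. This is the point at which one must exploit specific properties of the spray-induced multiplication — compatibility of the spray with $B$, together with the interplay between the exponential coordinates and the groupoid product — rather than soft categorical arguments; controlling this on a single uniform neighborhood of the units, and verifying independence of all auxiliary choices at the level of germs, is the technical crux.
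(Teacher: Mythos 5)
Your overall decomposition is the same as the paper's: essential surjectivity is the spray groupoid construction (Theorem \ref{theorem:thm2}), together with the observation that different sprays yield germ-isomorphic groupoids, while full faithfulness is a local Lie II. For the latter, however, you take a genuinely different route. The paper (Theorem \ref{thm:integr_morph}) never passes to graphs or subgroupoids; it writes the integration $F$ of $f:A_1\to A_2$ down directly as the time-$1$ solution of the ODE $\theta_{G_2}\big(\tfrac{d}{dt}k_t\big)=f\circ\theta_{G_1}\big(\tfrac{d}{dt}(tg)\big)$ attached to a tubular structure on $G_1$, and then uniqueness, the identity $F^*\theta_{G_2}=f\circ\theta_{G_1}$, and multiplicativity are all ODE-uniqueness arguments, with the one substantial external input being the variation-of-$A$-paths Lemma \ref{Lemma_CF} quoted from Crainic--Fernandes. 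Your graph formulation is the classical shape of Lie II for Lie groups and would, if completed, give an alternative organization of part (A).

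The genuine gap is exactly the step you flag and then do not supply: that the exponential image $\exp_V(B)$ of a subalgebroid (here $B=\mathrm{graph}(\phi)\subset A_1\times A_2$) is closed under the partially defined multiplication of $G_1\times G_2$. Nothing about ``compatibility of the spray with $B$'' makes this automatic. What is actually needed is that the Maurer--Cartan form, applied to vectors tangent to the candidate subgroupoid, takes values in $B$ --- which is equivalent to the identity $F^*\theta_{G_2}=f\circ\theta_{G_1}$ of Step 2 of the paper's proof --- and establishing this is precisely where Lemma \ref{Lemma_CF} (or an equivalent Frobenius/involutivity argument for the right-invariant distribution generated by $\Gamma(B)$) must enter; as written, your fullness argument is a reduction to the hard step rather than a proof of it. The faithfulness half is also under-justified: the fact that every element near the units is a single exponential $\exp_V(a)$ shows only that $\exp_V$ is injective, not that a local subgroupoid $H$ tangent to $B$ along the units must coincide with $\exp_V(B)$ as a germ (two submanifolds tangent along $M$ need not agree nearby). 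To pin down the germ one must use that $H$ is multiplication-closed, so that the right-invariant vector fields generated by sections of $B$ are tangent to $H$ and hence the ODE defining $\exp_V|_B$ evolves inside $H$ --- which is the differentiated-multiplicativity argument of Step 1 of the paper's proof in disguise. So the strategy is viable, but both halves of full faithfulness still rest on the unproven Maurer--Cartan compatibility, and that is where the real content of the theorem lies.
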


This result entails two parts:
\begin{itemize}
\item[(A)] Given two local Lie groupoids, every morphism between their respective Lie algebroids can be integrated to a local Lie groupoid map, and moreover, the germ around the unit section of this integration is unique.
\item[(B)] Every Lie algebroid is (isomorphic to) the Lie algebroid of a local Lie groupoid.
\end{itemize}

Validity of Theorem \ref{theo:equiv_cats} has been already  assumed since the work of Pradines in the 1960s (see \cite{Pradines0,Pradines,Pradines2}), however, part (B) was proven only later. In the setting of Poisson geometry, Coste \emph{et al.}\ \cite{CDW} constructed a local symplectic groupoid integrating a given Poisson manifold, by using the existence of symplectic realizations. Most likely, this result, when applied to the linear Poisson structure on the dual of a Lie algebroid, can be used to prove existence of local integrations in general; however, we are not aware of any written account of such an attempt. To our knowledge, the first complete proof of part (B) of Theorem \ref{theo:equiv_cats} appeared in the work of Crainic and Fernandes on integrability of Lie algebroids (see \cite[Corollary 5.1]{CF1}). The proof is a by-product of the main construction of their paper: the \emph{Weinstein groupoid} of a Lie algebroid. Namely, they prove that a submanifold of the Banach manifold of $C^1$-$A$-paths, which is transverse to the $A$-homotopy foliation and contains the unit section, inherits a local Lie groupoid structure with multiplication coming from concatenation of $A$-paths.
\medskip

The main result of this paper (Theorem \ref{theorem:thm2}) consists of an explicit construction of a local Lie groupoid integrating a given Lie algebroid, yielding a direct proof of part (B) of Theorem \ref{theo:equiv_cats}. For completeness, a proof of the more standard part (A) following similar methods is also included (Theorem \ref{thm:integr_morph}). 

Given a Lie algebroid $(A,[\cdot,\cdot],\rho)$, the input for our construction is a 
\textbf{Lie algebroid spray} consisting of a vector field $V\in \mathfrak{X}(A)$ which lifts the anchor: at $a\in A$, the tangent vector $V_a$ projects along $q:A\to M$ to $\rho(a)$. Such a spray can be constructed as the horizontal lift of $\rho(a)$ with respect to an ordinary linear connection on $A$ or, more generally, with respect to an \emph{$A$-connection} on $A$ (see \cite{CF1}). The first step in the construction of the local Lie groupoid is to realize its (right-invariant) Maurer-Cartan form. Concretely, using the spray $V$, one builds a Lie algebroid morphism
\[\theta:T^qU\rmap A,\ \ \ T^qU:=\ker(dq)\subset TU,\]
over an open neighborhood $U\subset A$ of the zero section, which is a fiber-wise isomorphism. The needed ``realization'' $\theta$ of $A$ was given via an explicit formula in \cite{Ori}; here, a different approach to the same construction is presented. The \textbf{local spray Lie groupoid} $G_V\rr M$ associated to $V$ has as total space an open neighborhood $G_V\subset A$ of the zero section that is sufficiently small so that the following structure maps are defined:
\begin{itemize}
\item the unit section $u:M\to G_V$ is the zero section;
\item the source map $\sigma:G_V\to M$ is the restriction of the bundle projection $q:A\to M$;
\item the target map is $\tau=q\circ \phi_V^1$, where $\phi_V^t$ denotes the local flow of $V$;
\item the inversion map is $\iota=-\phi_V^{1}$;
\item the multiplication $\mu:G_V\tensor[_\sigma]{\times}{_\tau}G_V\to U$ is defined as $\mu(a,b)=k_1$, where $k_t\in A_{q(b)}$, $t\in [0,1]$, is the solution to the ODE:
    \[\frac{dk_t}{dt}=\theta_{k_t}^{-1}(\phi_V^t(a)),\ \ k_0=b.\]
\end{itemize}

Theorem \ref{theorem:thm2} states that these maps define a local Lie groupoid with Maurer-Cartan form $\theta$, thus, providing an explicit proof for the existence of a local Lie groupoid integrating a given Lie algebroid.

In the case of a Lie algebra $A=\g$, with spray $V=0$, Theorem \ref{theorem:thm2} produces the local Lie group around $0\in\g$ defined by the Baker-Campbell-Hausdorff formula; see Example \ref{ex:Lie_groups}.

The spray construction can be used to provide explicit formulas for integrating various \emph{geometric structures} from infinitesimal data on the Lie algebroid to multiplicative local structures on the corresponding local Lie groupoids; this will be detailed in \cite{CMSbis}.

\medskip

{\bf Spray groupoids vs the space of $A$-paths.}
The constructions presented here are connected to the Crainic-Fernandes approach to integrability (see \cite{CF1}). Namely, any $A$-connection $\nabla$ on $A$ and any local Lie groupoid $G\rr M$ integrating $A$ determine an exponential map $\exp_{\nabla}:U\to G$ as in \cite{CF1}, which is a diffeomorphism between an open neighborhood $U\subset A$ of $M$ and its image. Transporting back the structure maps of $G$ along $\exp_{\nabla}$, one obtains a local Lie groupoid. Remarkably, the germ of this local Lie groupoid is independent of $G$, and is intrinsic to $A$ and $\nabla$. Moreover, it coincides with the local spray groupoid $G_V\rr M$ (see Corollary \ref{coro:exp}); this fact is non-trivial and is supported by Theorems \ref{theorem:thm2} and \ref{thm:integr_morph}. The above argument would yield a proof of Theorem \ref{theorem:thm2} if one assumes the existence of integrating local Lie groupoids; however, we do not follow this path, but give explicit formulas for the groupoid operations and prove directly the validity of all the axioms.

On the other hand, a possible candidate for an integrating local Lie groupoid is the one constructed in \cite{CF1} from the infinite-dimensional manifold $P(A)$ of $C^1$-$A$-paths. Namely, the path-exponential map corresponding to a Lie algebroid spray $V$,
\begin{equation}\label{eq:hatexp}
 \hexp: U \to P(A),\ \  a \mapsto (t\mapsto \phi_V^t(a))_{t\in [0,1]},
\end{equation}
is defined on a neighborhood $U\subset A$ of $M$. As shown in \cite{CF1}, after shrinking $U$, $\hexp$ is transverse to the $A$-homotopy foliation $\mathcal{F}$ and there is an induced local Lie groupoid structure on $G:=\hexp(U)$ integrating $A$ \cite[Corollary 5.1]{CF1} with multiplication coming from concatenation of $A$-paths, as in the construction of the Weinstein groupoid $G(A)=P(A)/\mathcal{F}$ associated to $A$.\\

{\bf Contents.} In Section \ref{sec:local}, general terminology related to local Lie groupoids and Lie algebroids is introduced, and Theorem \ref{thm:integr_morph} on the local integration of Lie algebroid morphisms is proven. Section \ref{sec:spray} presents the construction of the spray groupoid (Theorem
\ref{theorem:thm2}) and gives the explicit description of integrations of morphisms whose domain is a spray groupoid (Corollary \ref{corollary:morphisms}). 

\medskip

\textbf{Acknowledgments.} The authors would like to thank Marius Crainic, Pedro Frejlich, Rui Loja Fernandes, Marco Gualtieri, Eckhard Meinrenken and Daniele Sepe for useful discussions. The authors would like to thank the anonymous referee for his or her careful reading and suggestions. I.M.\ was supported by the NWO Veni grant 613.009.031 and the NSF grant DMS 14-05671. M.A.S.\ was a Post-Doctorate at IMPA during part of this project, funded by CAPES-Brazil. A.C. would like to thank CNPq and FAPERJ for financial support.

\section{Local integration of Lie algebroid morphisms}\label{sec:local}

In this section we describe an explicit integration procedure for Lie algebroid morphisms based on {\bf Maurer-Cartan forms} and {\bf{tubular structures}} for local Lie groupoids. This construction provides a proof of the fact that the Lie functor taking germs of local Lie groupoid maps to Lie algebroid morphisms is full and faithful. 

\subsection{Local Lie groupoids: notations and conventions}\label{subsection:defi_loc_lie}

Let us begin by explaining what we mean by a local Lie groupoid. Consider a manifold $G$, the space of arrows, a closed embedded submanifold $M\subset G$, the space of units, and a submersion $\sigma:G\to M$, the source map, such that $\sigma|_M=\mathrm{id}_M$. The structure of a \textbf{local Lie groupoid} on $(G,M,\sigma)$ consists of maps
\[\mathrm{target}\ \  \tau:G_{\tau}\to M, \ \ \ \mathrm{inversion}\ \ \iota: G_{\iota}\to G, \ \ \ \mathrm{multiplication}\ \ \mu:G_{\mu}\to G,\]
where $G_{\tau},G_{\iota}\subset G$ are open neighborhoods of $M$, $\tau$ satisfies $\tau|_M=\mathrm{id}_M$, $G_{\mu}\subset G\tensor[_\sigma]{\times}{_\tau}G_{\tau}$ is an open neighborhood of $M\cong \{(x,x) : x\in M\}$, such that the axioms of a Lie groupoid are satisfied locally around $M$. To make this precise, note that each groupoid axiom can be written as the equality of two maps defined on $G^{(k)}=\{(g_1,\ldots,g_k): \sigma(g_i)=\tau(g_{i+1})\}$; for example:
\[\tau\circ \iota=\sigma:G\to M, \ \ \ \ \tau\circ \mu=\tau\circ\mathrm{pr}_1:G^{(2)}\to M,\]
\[\mu\circ(\mu\times\mathrm{id}_G)=\mu\circ (\mathrm{id}_G\times\mu):G^{(3)}\to G.\]
We are assuming that each axiom holds an neighborhood of $M\cong\{(x,\ldots,x): x\in M\}$ in $G^{(k)}$.


The more standard definition of a local Lie groupoid \cite{CDW,VEst} assumes that the target map and the inversion map are globally defined, and also that some of the axioms hold globally. However, the germ of any local groupoid (in our sense) can be represented by a local groupoid in the classical sense (of \emph{loc.cit.}):

\begin{proposition}
For a local Lie groupoid structure on $(G,M,\sigma)$ there is an open neighborhood $U$ of $M$ in $G$ satisfying
\[U\subset G_{\tau}\cap G_{\iota},\ \ \iota(U)=U, \ \ U\tensor[_\sigma]{\times}{_\tau}U\subset G_{\mu},\]
and such that for $g,h\in U$ the following axioms hold:
\[\iota(\iota (g))=g, \ \ \tau(g)=\sigma(\iota(g)), \ \ \mu(\tau(g),g)=g=\mu(g,\sigma(g)),\]
\[\mu(g,\iota(g))=\tau(g), \ \ \mu(\iota(g),g)=\sigma(g), \ \ \sigma(\mu(g,h))=\sigma(h),\]
and, letting $U_{\mu}:=\{(g,h)\in U\tensor[_\sigma]{\times}{_\tau} U\ :\ \mu(g,h)\in U\}$, for $(g,h)\in U_{\mu}$ the following hold:
\[(\iota(h),\iota(g))\in U_{\mu}, \ \ \ \iota(\mu(g,h))=\mu(\iota(h),\iota(g)), \ \ \ \tau(\mu(g,h))=\tau(g),\]
and if the elements $(g,h)$, $(\mu(g,h),k)$ and $(h,k)$ belong to $U_{\mu}$, then also $(g,\mu(h,k))$ belongs to $U_{\mu}$ and we have that:
\[\mu(g,\mu(h,k))=\mu(\mu(g,h),k).\]

Moreover, the collection of such open sets $U$ forms a basis of neighborhoods of $M$ in $G$.
\end{proposition}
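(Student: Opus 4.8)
The plan is to produce $U$ by shrinking $G$ toward $M$ until, for the finitely many relations listed, every composable tuple with entries in $U$ both lies in the domain where the relevant structure maps are defined and falls inside the open set $G_\A\subset G^{(k)}$ on which that axiom's germ equation holds. The guiding observation is that, apart from the two ``membership'' conclusions (that $(\iota(h),\iota(g))\in U_\mu$, and that $(g,\mu(h,k))\in U_\mu$), every listed relation is an equality of continuous maps that already holds on an open neighborhood of the unit diagonal $M^{(k)}$; the sole purpose of the constraint defining $U_\mu$ is to guarantee that intermediate products land back in $U$, so that the next operation is defined.

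First I would isolate the main topological input as a shrinking lemma: for any open set $O\subset G^{(k)}$ containing $M^{(k)}$ and any neighborhood $W$ of $M$, there is an open $U$ with $M\subset U\subset W$ such that $U^{(k)}:=G^{(k)}\cap U^{k}\subset O$. The point is that any composable tuple whose entries all lie close to $M$ must lie close to $M^{(k)}$: composability forces the base points $\sigma(g_i)=\tau(g_{i+1})$ to coincide, so if each $g_i$ is near a unit then all those units nearly agree and the whole tuple clusters near a single point of $M^{(k)}\subset O$. Since $M$ need not be compact, I would make this uniform over $M$ using a locally finite cover by charts adapted to the embedded submanifold $M$ (equivalently, a compatible metric together with a positive continuous ``radius'' function), which is the one genuinely technical step. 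Applying this lemma simultaneously to the finitely many domains $G_\A$ occurring for $k=1,2,3$, and intersecting the resulting neighborhoods with $G_\tau\cap G_\iota$, I obtain a first candidate $U_0\subset W$.

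Next I would symmetrize. Because $\iota^2=\mathrm{id}$ holds near $M$, after shrinking $U_0$ into a set on which $\iota$ is an involution I set $U:=U_0'\cap\iota(U_0')$; then $\iota(U)=U$ and, by the previous step, $U^{(k)}\subset G_\A$ for each relevant axiom. It remains to read off each listed identity. The unit, inverse, and $\sigma\!\circ\!\mu$, $\tau\!\circ\!\mu$, $\iota\!\circ\!\mu$ relations hold because the corresponding germ equalities hold on $G_\A\supset U^{(k)}$ and all intermediate terms are defined: the units $\tau(g),\sigma(g)$ and the inverse $\iota(g)$ lie in $U$ (using $\iota(U)=U$), and wherever a second operation is applied the $U_\mu$-constraint places the relevant product in $U$. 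The two membership conclusions are then \emph{derived} rather than imposed: for $(g,h)\in U_\mu$ one has $\iota(h),\iota(g)\in U$, the computed composability $\sigma(\iota(h))=\tau(h)=\sigma(g)=\tau(\iota(g))$, and $\mu(\iota(h),\iota(g))=\iota(\mu(g,h))\in\iota(U)=U$, giving $(\iota(h),\iota(g))\in U_\mu$; and under the associativity hypothesis the equation $\mu(g,\mu(h,k))=\mu(\mu(g,h),k)$ holds because $(g,h,k)\in U^{(3)}\subset G_\A$, whence $(g,\mu(h,k))$ is composable (via $\tau(\mu(h,k))=\tau(h)$) and satisfies $\mu(g,\mu(h,k))=\mu(\mu(g,h),k)\in U$, i.e.\ lies in $U_\mu$.

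The main obstacle I anticipate is exactly this interplay between shrinking and the $U$-dependent set $U_\mu$: one cannot establish the axioms one at a time by independent shrinkings, since the conclusions ``$\cdots\in U_\mu$'' are not preserved when $U$ is further shrunk. The resolution is the \emph{monotonicity} $U'\subset U\Rightarrow U'_\mu\subset U_\mu$, which guarantees that all the purely equational relations, once valid for every composable tuple in $U$, persist for any smaller neighborhood; this lets me fix a single $U$ for which the equations hold unconditionally on $U^{(k)}$ and then deduce the two membership statements from the equations themselves, as above. Finally, since the whole construction can be carried out inside an arbitrary prescribed neighborhood $W$, the good sets $U$ form a basis of neighborhoods of $M$ in $G$.
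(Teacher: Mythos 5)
Your proposal is correct and follows essentially the same route as the paper's (very brief) proof: shrink a neighborhood of $M$ step by step, using as the key topological input that sets of the form $U^{(k)}=G^{(k)}\cap U^k$ form a basis of neighborhoods of $M^{(k)}$ in $G^{(k)}$, and symmetrize via $U=V\cap\iota(V)$ to get $\iota(U)=U$. Your additional care about uniformity over a non-compact $M$, the monotonicity $U'\subset U\Rightarrow U'_\mu\subset U_\mu$, and the derivation (rather than imposition) of the two membership conclusions are exactly the details the paper leaves implicit.
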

\begin{proof}
The proof is straightforward: start with any open neighborhood of $M$ in $G$ and shrink it step by step such that it satisfies all the properties. For example, to ensure that $\iota(U)=U$, and $\iota^2(g)=g$, take $U$ of the form $U=V\cap \iota(V)$, where $V$ is small enough such that $V,\iota(V)\subset G_{\iota}$ and $\iota^2|_V=\mathrm{id}_V$. An important point is to notice that open sets of the form $U^{(k)}=G^{(k)}\cap U^k$, where $U\subset G_{\tau}\cap G_{\iota}$ is an open neighborhood $M$, form a basis of neighborhoods of $M$ in $G^{(k)}$; this follows because $G^{(k)}$ has the subset topology of the product topology on $(G_{\tau}\cap G_{\iota})^k$.
\end{proof}

Any local Lie groupoid on $(G,M,\sigma)$ has an associated Lie algebroid $A\to M$, which is constructed exactly as in the case of ordinary Lie groupoids. As a vector bundle $A=\ker(d\sigma)|_M$, the anchor of $A$ is $\rho=d\tau|_A:A\to TM$, and the Lie bracket is obtained by identifying sections of $A$ with right invariant vector fields on $G$.

\begin{convention}\label{conv}
In order to keep notation as simple as possible, we will denote maps which are defined only in a neighborhood of $M$ with a dashed arrow $\dto$. In particular, the structure maps of a local groupoid on $(G,M,\sigma)$ are written as $\tau:G\dashrightarrow M$, $\iota:G\dashrightarrow G$ and $\mu:G\tensor[_\sigma]{\times}{_\tau}G\dto G$.
\end{convention}

\subsection{The Maurer-Cartan form}\label{sec:MC_form}

We recall the definition of the right-invariant Maurer-Cartan form $\theta_G$ of a local Lie groupoid $G$. Denote the vertical bundle of $\sigma$ by $T^{\sigma}G:=\ker(d \sigma)$. Given a smooth path $(-\epsilon,\epsilon)\to G$, $t\mapsto g_t$, such that $\sigma(g_t)=x$ for all $t$, its derivative at $t=0$ gives an element in $T^{\sigma}G$:
\[\frac{d}{dt}g_t\big|_{t=0}\in T^{\sigma}_{g_0}G,\]
and all elements in $T^{\sigma}G$ can be represented in this way. The \textbf{right-invariant Maurer-Cartan form} (or Maurer-Cartan form, for brevity) of $G$, denoted by $\theta_G$, is the following vector bundle map covering the target map:
\begin{equation}\label{eq:MC_diagram}\begin{array}{cc}
\xymatrixrowsep{0.4cm}
\xymatrixcolsep{1.2cm}
\xymatrix{
 T^{\sigma}G \ar@{-->}[r]^{\theta_G}\ar[d] & A\ar[d] \\
 G\ar@{-->}[r]^{\tau} & M}, &
\theta_G\Big(\frac{d }{dt}g_t\big|_{t=0}\Big):=\frac{d}{dt}g_tg_0^{-1}\big|_{t=0}\in A_{\tau(g_0)}.\end{array}\end{equation}
Notice that the domain of $\theta_G$ is of the form $T^{\sigma}U$, where $U\subset G$ is a neighborhood of $M$, and that it is right invariant: $\theta_G\Big(\frac{d }{dt}g_th\big|_{t=0}\Big)=\theta_G\Big(\frac{d }{dt}g_t\big|_{t=0}\Big)$, for any $h$ such that $\tau(h)=\sigma(g_t)=x$. 

The Maurer-Cartan form $\theta_G$ is a Lie algebroid map between the vertical distribution $T^{\sigma}G$ and $A$ (this condition can be written in the form of a Maurer-Cartan equation, see \cite{RuiIvan}). This Lie algebroid morphism integrates to a local groupoid map, called the {\bf division map}, given by:
\begin{equation}\label{division_map}
\xymatrixrowsep{0.4cm}
\xymatrixcolsep{1.2cm}
\xymatrix{
 G\tensor[_\sigma]{\times}{_\sigma}G \ar@{-->}[r]^{\delta}\ar@<-.5ex>[d] \ar@<.5ex>[d] & G\ar@<-.5ex>[d] \ar@<.5ex>[d] \\
 G \ar@{-->}[r]^{\tau} & M},\ \  \delta(g,h)=gh^{-1}.\end{equation}

\begin{remark}\label{rmk:factorizatgd}
The division map can be factorized as the following composition of groupoid maps
 \begin{equation}\label{eq:division_diagram}
\xymatrixrowsep{0.4cm}
\xymatrixcolsep{1.2cm}
\xymatrix{
 G\tensor[_\sigma]{\times}{_\sigma}G \ar@{^{(}->}[r]\ar@<-.5ex>[d] \ar@<.5ex>[d] & \sigma^!G\ar@{-->}[r]^{\Ad_{\beta}}\ar@<-.5ex>[d] \ar@<.5ex>[d] & \sigma^!G\ar@<-.5ex>[d] \ar@<.5ex>[d]\ar[r]^{\mathrm{pr}_2}& G\ar@<-.5ex>[d] \ar@<.5ex>[d] \\
 G\ar[r]^{\mathrm{id}}&G\ar@{-->}[r]^{\iota}&G \ar[r]^\sigma & M,}
 \end{equation}
 where $\sigma^!G=G\tensor[_\sigma]{\times}{_\tau}G\tensor[_\sigma]{\times}{_\sigma}G$ is the pullback groupoid of $G$ via the submersion $\sigma:G\to M$, with source $\bar\sigma(l,g,k)=k$, target $\bar\tau(l,g,k)=l$, and multiplication $(l,g,k)(k,h,e)=(l,gh,e)$ (see e.g.\ \cite{Mac}); where the inclusion $G\tensor[_\sigma]{\times}{_\sigma}G\hookrightarrow \sigma^!G$ maps $(g,h)$ to $(g,1_{\sigma(g)},h)$; and, using the canonical local bisection $\beta:G\dto\sigma^!G,\ \beta(g)=(g^{-1},g,g)$, the local groupoid map $\Ad_{\beta}$ is conjugation by $\beta$: $\Ad_{\beta}(l,g,k)=\beta(l)(l,g,k)\beta(k)^{-1}=(l^{-1},lgk^{-1},k^{-1})$. 
\end{remark}

\subsection{Tubular structures}\label{subsec:tubular}

In this subsection we discuss special tubular neighborhoods on local Lie groupoids.

Let $G$ be a local groupoid. Along the unit section $M\subset G$ the fibers of the source map give a natural decomposition of the tangent space $TG|_M=TM\oplus A$; in particular, the normal bundle of $M$ in $G$ is canonically identified with $A$.

A \textbf{tubular structure} on the local groupoid $G$ is a tubular neighborhood of $M$ in $G$ along the source fibers, i.e.\ an open embedding of bundles
\[
\xymatrixrowsep{0.4cm}
\xymatrixcolsep{1.2cm}
\xymatrix{
 A \ar@{-->}[r]^{\varphi}\ar[d]_{q} & G\ar[d]^{\sigma} \\
 M\ar[r]^{\mathrm{id}_M} & M}\]
where $q:A\to M$ denotes the projection, such that: $\sigma\circ\varphi=q$, $\varphi(0_x)=x$, for $x\in M$, and
\[\frac{d}{dt}\varphi(ta)\big|_{t=0}=(0,a)\in T_xM\oplus A_x=T_{x}G,\]
for all $a\in A$, where $x=q(a)$.

Note that $\varphi$ is defined on a neighborhood of $M$ in $A$. A tubular structure induces a scalar multiplication on $G$ along the source fibers
\[(t,g)\mapsto t g:=\varphi(t\varphi^{-1}(g)),\]
which is defined on a neighborhood of $\{0\}\times M$ in $\R\times G$. Moreover, the germ of $\varphi$ around $M$ can be recovered from this operation: $\varphi(\frac{d}{dt}tg|_{t=0})=g$. Clearly, any local Lie groupoid admits a tubular structure.

\subsection{Local integration of Lie algebroid morphisms}

A \textbf{local Lie groupoid map} between local Lie groupoids $G_1$ and $G_2$ is a smooth map \[F:G_1\dto G_2\]
defined on a neighborhood of $M_1\subset G_1$ that restricts to a map between the units
\[F_{M}:=F|_{M_1}:M_1\to M_2,\]
and that is multiplicative around $M_1$, i.e.\ $F(gh)=F(g)F(h)$ for all $(g,h)$ in an open neighborhood of $M_1$ in $G_1\tensor[_\sigma]{\times}{_\tau}G_1$. The local groupoid map $F$ induces a Lie algebroid morphism, denoted by
\[
\xymatrixrowsep{0.4cm}
\xymatrixcolsep{1.2cm}
\xymatrix{
 A_1 \ar[r]^{F_A}\ar[d] & A_2\ar[d] \\
 M_1\ar[r]^{F_M} & M_2}\]
where $A_1\to M_1$ and $A_2\to M_2$ are the respective Lie algebroids and $F_{A} = dF|_{A_1}$. In this case, we say that $F$ \textbf{integrates} the Lie algebroid morphism $F_{A}$.

Local Lie groupoids and germs of local Lie groupoid maps form a category. The Lie functor, taking $G$ to its Lie algebroid $A$, and a germ of a local Lie groupoid map $F$ to the induced Lie algebroid map $F_A$, is an equivalence to the category of Lie algebroids. This fact seems to be well-known; however, a complete reference is missing from the literature (see the bibliographical comments in the introduction). Our constructions below provide a detailed proof of this result. First, we show that the functor is full and faithful, i.e.\ Lie algebroid morphisms can be integrated to local Lie groupoid maps. Moreover, using a tubular structure on the domain, the integration can be made quite explicit. That the functor is essentially surjective will be shown in the next section, where we present a construction of a local integration of a Lie algebroid.

To address the integration of morphisms, let $G_i$ be local Lie groupoids with Lie algebroids $A_i$, $i=1,2$, and let $f:A_1 \to A_2$ be a Lie algebroid morphism covering $f_{M}:M_1\to M_2$. Fixing a tubular structure on $G_1$ and an element $g\in G_1$, the map $f$ together with the Maurer-Cartan forms $\theta_{G_i}$ give rise to the following ODE: 
\begin{equation}\label{eq:ODE_maps}
\theta_{G_2}\Big(\frac{d}{dt}k_t\Big)=f\circ \theta_{G_1}\Big(\frac{d}{dt}(tg)\Big),\ \ \ k_{0}=f_M(\sigma_1(g)),
\end{equation}
for a curve $t\mapsto k_t\in\sigma_2^{-1}(f_M(\sigma_1(g)))\subset G_2$.

\begin{theorem}\label{thm:integr_morph}
Let $G_1$ and $G_2$ be two local Lie groupoids with Lie algebroids $A_1$ and $A_2$, respectively, and $f:A_1\to A_2$ be a Lie algebroid morphism. There exists a local Lie groupoid map $F:G_1\dto G_2$ integrating $f$, and any two such integrations coincide around the unit section. 

Explicitly, fixing a tubular structure on $G_1$, we have that $F(g)=k_1$
where $k_t$ is the solution of the ODE \eqref{eq:ODE_maps} and $g\in G_1$ is taken close enough to the unit section such that the solution is defined for $t\in[0,1]$.
\end{theorem}

As an immediate consequence of Theorem \ref{thm:integr_morph}, we recover the well-known fact that the germ of an integration is essentially unique:

\begin{corollary}\label{corollary: all the same}
Any two local Lie groupoids integrating a given Lie algebroid are isomorphic around the unit section. 
\end{corollary}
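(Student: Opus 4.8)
The plan is to deduce Corollary \ref{corollary: all the same} directly from Theorem \ref{thm:integr_morph}, using the identity Lie algebroid morphism. Suppose $G_1$ and $G_2$ are two local Lie groupoids whose Lie algebroids are both (isomorphic to) a fixed Lie algebroid $A$; after composing with the given isomorphisms, we may assume $A_1 = A_2 = A$ and take $f = \mathrm{id}_A$. The base map is then $f_B = \mathrm{id}_M$. I would apply Theorem \ref{thm:integr_morph} twice: once to integrate $\mathrm{id}_A$ to a local groupoid map $F: G_1 \dto G_2$ with $F_A = \mathrm{id}_A$, and once to integrate $\mathrm{id}_A$ in the reverse direction to a map $\bar{F}: G_2 \dto G_1$ with $\bar{F}_A = \mathrm{id}_A$. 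The claim is that these are inverse germs of isomorphisms.

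Next I would form the composites $\bar{F}\circ F: G_1 \dto G_1$ and $F\circ\bar{F}: G_2 \dto G_2$. Each of these is a local Lie groupoid map (composition of multiplicative maps defined near the units is again multiplicative near the units), and by functoriality of the Lie functor their induced Lie algebroid morphisms are $(\bar{F}\circ F)_A = \bar{F}_A \circ F_A = \mathrm{id}_A \circ \mathrm{id}_A = \mathrm{id}_A$, and similarly $(F\circ\bar{F})_A = \mathrm{id}_A$. But the identity map $\mathrm{id}_{G_1}: G_1 \dto G_1$ is also a local Lie groupoid map integrating $\mathrm{id}_A$. Now I invoke the uniqueness clause of Theorem \ref{thm:integr_morph}: the germ around the unit section of an integration of a given Lie algebroid morphism is unique. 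Hence $\bar{F}\circ F$ and $\mathrm{id}_{G_1}$ have the same germ around $M$, and likewise $F\circ\bar{F}$ and $\mathrm{id}_{G_2}$ agree as germs. This exhibits $F$ (equivalently its germ) as an invertible local Lie groupoid map, i.e.\ an isomorphism of germs of local Lie groupoids, completing the argument.

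The only subtlety, and the point I would be most careful about, is bookkeeping with domains of definition: all maps here are only defined on neighborhoods of the unit sections (the $\dto$ of Convention \ref{conv}), so the composites $\bar{F}\circ F$ etc.\ are defined only after shrinking to a possibly smaller neighborhood of $M$. This is precisely why the statement is phrased at the level of germs, and why the uniqueness in Theorem \ref{thm:integr_morph} is also stated for germs around the units. Since morphisms in the category are germs, there is no real obstacle here — one simply passes to a common neighborhood on which both composites and the relevant multiplicativity identities hold, and the germ-level conclusion is insensitive to the choice. I expect no genuine difficulty beyond this routine neighborhood-shrinking; the essential content is entirely carried by Theorem \ref{thm:integr_morph}, so the corollary is a formal consequence.
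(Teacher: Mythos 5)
Your argument is correct and is exactly the one the paper intends: the corollary is stated there as an immediate consequence of Theorem \ref{thm:integr_morph}, obtained by integrating $\mathrm{id}_A$ in both directions and using the uniqueness of germs of integrations to identify the composites with the identities. The care you take with domains and germ-level composition is appropriate but, as you note, routine.
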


An essential role in the proof of the above theorem is played by the following extract (Lemma \ref{Lemma_CF} below) of \cite[Proposition 1.3]{CF1} which describes variations of Lie algebroid paths. Before stating this result, we recall from \cite{CF1} that, given a Lie algebroid $A$, a time-dependent section $(t,x)\mapsto \alpha(t,x) \in A_x$ defines time-dependent flow of Lie algebroid automorphisms $\phi_\alpha^{t,s} \in \rm{Aut}(A)$ induced by the inner derivation $\ad_{\alpha}=[\alpha, -]$ of $A$:
\begin{equation}\label{eq:flowofsec}
 \frac{d}{dt}(\phi_\alpha^{t,s})^*\beta = (\phi_\alpha^{t,s})^*[\alpha^t,\beta], \ \forall \beta \in \Gamma(A), \ \phi_\alpha^{s,s} = \mathrm{id}, 
\end{equation}
where  the pull-back of sections is defined as $(\phi_\alpha^{t,s})^*\beta = \phi_\alpha^{s,t} \circ \beta \circ \phi_{\rho(\alpha)}^{t,s}$. The flow has also the multiplicative property:
\[\phi_\alpha^{t,s}\circ \phi_\alpha^{s,u} = \phi_\alpha^{t,u}.\]

\begin{remark}
When $A$ is the Lie algebroid of a local groupoid $G$, the flow of $\alpha$ can be naturally integrated to a flow by local Lie groupoid maps as follows (see \cite{CF1}). Let $\phi^{t,s}_{\alpha^R}$ denote the flow of the time-dependent right invariant vector field $\alpha^R\in\X(G)$ induced by $\alpha$. This flow gives rise to a family of bisections $\varphi^{t,s}_\alpha:=\phi^{t,s}_{\alpha^R}|_M$ of $G$ by restricting to $M\subset G$. Then, the local Lie groupoid map $\Ad_{\varphi^{t,s}_{\alpha}}:G\dto G$ defined by conjugation by $\varphi^{t,s}_{\alpha}$ integrates the flow of the time dependent section $\alpha$ of $A$:
\begin{equation}\label{eq:bisections}d(\Ad_{\varphi^{t,s}_{\alpha}})|_A=\phi^{t,s}_{\alpha}.\end{equation}
\end{remark}

We now state the results on variations of algebroid paths that we need in the proof of Theorem \ref{thm:integr_morph}.

\begin{lemma}\label{Lemma_CF}(\cite{CF1})
Let $q:A\to M$ be a Lie algebroid. Consider a smooth map
\[a:[0,1]\times J\to A, \ \ (t,\epsilon)\mapsto a_{\epsilon}(t),\]
where $J$ is an interval, and denote the base map by $\gamma_{\epsilon}(t):=q\circ a_{\epsilon}(t)$. Assume that $\gamma_{\epsilon}(0)=x$ for all $\epsilon\in J$, where $x\in M$ is fixed, and that $a_{\epsilon}$ is an $A$-path for all $\epsilon\in J$, i.e.\
$\label{eq:Apathdef} \rho\circ a_{\epsilon}(t)=\frac{d}{dt}\gamma_{\epsilon}(t),$
where $\rho:A\to TM$ denotes the anchor. Then there exists a unique smooth map
\[b:[0,1]\times J\to A, \ \ (t,\epsilon)\mapsto b_{\epsilon}(t)\]
such that $b_{\epsilon}(t)\in A_{\gamma_{\epsilon}(t)}$, $b_{\epsilon}(0)=0$, and such that the following is a Lie algebroid morphism:
\[f_{a,b}:T([0,1]\times J)\to A, \ \ f_{a,b}= a_{\epsilon}(t)dt+b_{\epsilon}(t)d\epsilon.\]

Moreover, $b$ can be explicitly constructed as follows:
\begin{enumerate}[label=(\arabic*)]
\item Let $\alpha_{\epsilon}(t)$ be a family of compactly supported sections of $A$ depending smoothly on $(t,\epsilon)\in [0,1]\times J$ such that $\alpha_{\e}(t,\gamma_{\epsilon}(t))= a_{\epsilon}(t)$. Denote the flow of the time dependent section $\alpha_\epsilon$ by
$\phi_{\alpha_\epsilon}^{t,s}(x) : A_x \to A_{y},  \ y:={\phi_{\rho(\alpha_\epsilon)}^{t,s}(x)}.$
Then $b$ is given by the integral formula:
\begin{equation}\label{eq:int_formula}
b_{\epsilon}(t)=\int_0^t\phi_{\alpha_\epsilon}^{t,s}(\gamma_{\epsilon}(s))\frac{d \alpha_{\epsilon}}{d\epsilon}(s,\gamma_{\epsilon}(s))d s.
\end{equation}
\item If $A$ is integrable by a Lie groupoid $G$, and $g:[0,1]\times J\to G $ is the solution to the ODE
    \[\theta_{G}\Big(\frac{d}{dt}g_{\epsilon}(t)\Big)=a_{\epsilon}(t), \ \ \ g_{\epsilon}(0)=1_x,\]
where $\theta_G$ denotes the Maurer-Cartan form of $G$, then $b$ is given by
    \[b_{\epsilon}(t)=\theta_{G}\Big(\frac{d}{d \epsilon}g_{\epsilon}(t)\Big).\]
\end{enumerate}
\end{lemma}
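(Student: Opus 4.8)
The plan is to read $f_{a,b}=a_\epsilon(t)\,dt+b_\epsilon(t)\,d\epsilon$ as a vector bundle map $T([0,1]\times J)\to A$ over $\gamma$ and to use the standard characterization of Lie algebroid morphisms out of a tangent algebroid: testing the pullback of the Chevalley--Eilenberg differential $\dd_A$ on the generators $C^\infty(M)$ and $\Gamma(A^*)$, the map $f_{a,b}$ is a morphism if and only if (i) it intertwines anchors, i.e.\ $\rho(a_\epsilon(t))=\partial_t\gamma_\epsilon(t)$ and $\rho(b_\epsilon(t))=\partial_\epsilon\gamma_\epsilon(t)$, and (ii) for every $\alpha\in\Gamma(A^*)$ the Maurer--Cartan equation
\[\partial_t\langle\alpha,b_\epsilon(t)\rangle-\partial_\epsilon\langle\alpha,a_\epsilon(t)\rangle=(\dd_A\alpha)(a_\epsilon(t),b_\epsilon(t))\]
holds. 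The first anchor condition is exactly the hypothesis that $a_\epsilon$ is an $A$-path, so only (ii) and the second anchor condition concern the unknown $b$. The key observation is that the right-hand side of (ii) is algebraic (pointwise) in $b$, while the left-hand side involves only a first $t$-derivative of $b$; hence, after fixing an auxiliary linear connection on $A$ to differentiate the moving-fibre path $t\mapsto b_\epsilon(t)\in A_{\gamma_\epsilon(t)}$, equation (ii) becomes a first-order linear inhomogeneous ODE in $t$ for $b_\epsilon$. Imposing $b_\epsilon(0)=0$ then gives existence and uniqueness of $b_\epsilon(t)$, with smooth dependence on $(t,\epsilon)$ by smooth dependence of ODE solutions on parameters.

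Next I would verify that the $b$ produced by (ii) alone automatically satisfies the remaining anchor condition $\rho(b_\epsilon(t))=\partial_\epsilon\gamma_\epsilon(t)$. This is the conceptual heart, and it is clean: test (ii) with $\alpha=\dd_A h$ for $h\in C^\infty(M)$. Then $\langle\dd_A h,a_\epsilon(t)\rangle=\rho(a_\epsilon(t))(h)=\partial_t(h\circ\gamma_\epsilon)$, while $(\dd_A\dd_A h)(a,b)=0$ since $\dd_A^2=0$, so (ii) reduces to $\partial_t\big(\rho(b_\epsilon(t))(h)\big)=\partial_t\partial_\epsilon(h\circ\gamma_\epsilon)=\partial_t\big(\partial_\epsilon\gamma_\epsilon(t)(h)\big)$. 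Thus $w_\epsilon(t):=\rho(b_\epsilon(t))-\partial_\epsilon\gamma_\epsilon(t)$ satisfies $\partial_t\langle dh,w_\epsilon(t)\rangle=0$ for all $h$, so $\langle dh,w_\epsilon\rangle$ is constant in $t$; its value at $t=0$ vanishes because $b_\epsilon(0)=0$ and $\gamma_\epsilon(0)=x$ is independent of $\epsilon$. Hence $w\equiv 0$, and $f_{a,b}$ is a genuine morphism. This establishes the existence, uniqueness and smoothness assertions.

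For the integral formula \eqref{eq:int_formula} I would choose the family of sections $\alpha_\epsilon(t)$ with $\alpha_\epsilon(t)(\gamma_\epsilon(t))=a_\epsilon(t)$ and rewrite the Maurer--Cartan equation using the flow $\phi^{\alpha_\epsilon}_{t,s}$ of the time-dependent section $\alpha_\epsilon$, viewed as a family of infinitesimal automorphisms of $A$ covering the anchor flow $\phi^{\rho(\alpha_\epsilon)}_{t,s}$ (which carries $\gamma_\epsilon(s)$ to $\gamma_\epsilon(t)$ because $a_\epsilon$ is an $A$-path). Relative to this flow the homogeneous part of the ODE from (ii) is trivialized — the flow is precisely its solution operator — so variation of parameters solves (ii) as an integral whose source term is identified, by differentiating $\alpha_\epsilon(t)(\gamma_\epsilon(t))=a_\epsilon(t)$ in $\epsilon$ and matching against the bracket term $(\dd_A\alpha)(a,b)$, with $\tfrac{d\alpha_\epsilon}{d\epsilon}(s,\gamma_\epsilon(s))$; this reproduces \eqref{eq:int_formula}, and uniqueness from the previous step identifies it with $b$. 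I expect this matching of the inhomogeneous term with $\tfrac{d\alpha_\epsilon}{d\epsilon}$, together with the correct bookkeeping of the moving fibres and the flow, to be the main obstacle and the only genuinely computational part.

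Finally, for the integrable case (2), suppose $A=\mathrm{Lie}(G)$ and solve $\theta_G(\partial_t g_\epsilon(t))=a_\epsilon(t)$, $g_\epsilon(0)=1_x$. Since $\partial_t g_\epsilon\in T^\sigma G$ the source is constant, so $\sigma(g_\epsilon(t))=x$ and $\tau(g_\epsilon(t))=\gamma_\epsilon(t)$, giving a smooth map $g:[0,1]\times J\to G$ with image in the source fibre $\sigma^{-1}(x)$; therefore $dg$ lands in $T^\sigma G$ and $g^*\theta_G=\theta_G\circ dg$ is a composite of Lie algebroid morphisms, hence a morphism $T([0,1]\times J)\to A$. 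Its $dt$-component is $a_\epsilon$ by construction, and its $d\epsilon$-component $\tilde b_\epsilon(t):=\theta_G(\partial_\epsilon g_\epsilon(t))$ vanishes at $t=0$ because $g_\epsilon(0)=1_x$ is independent of $\epsilon$. By the uniqueness already proved, $\tilde b=b$, which is exactly the asserted formula $b_\epsilon(t)=\theta_G\big(\tfrac{d}{d\epsilon}g_\epsilon(t)\big)$.
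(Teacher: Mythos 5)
Your outline is measured against Crainic--Fernandes rather than against anything in this paper: the lemma is quoted verbatim from \cite{CF1} (Proposition 1.3) and the paper itself supplies no proof. Most of your reconstruction is sound and parallels \emph{loc.\ cit.}: the dual characterization of morphisms $T([0,1]\times J)\to A$ by testing the pullback of $\dd_A$ on $C^\infty(M)$ and $\Gamma(A^*)$ is correct; the uniqueness argument is valid (any morphism with $b_\epsilon(0)=0$ satisfies both anchor identities, hence the connection ODE, hence is the unique ODE solution); part (2) via $g^*\theta_G$ being a composite of algebroid morphisms with $dt$-component $a$ and $d\epsilon$-component vanishing at $t=0$, plus uniqueness, is exactly right; and the Duhamel derivation of \eqref{eq:int_formula} from the ODE, with the flow $\phi^{\alpha_\epsilon}_{t,s}$ as solution operator of the homogeneous part, is the correct route (you fairly flag the bookkeeping there as computational).

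There is, however, one genuine gap, and it sits at what you call the conceptual heart. Condition (ii) quantified over all $\alpha\in\Gamma(A^*)$ is \emph{not} an ODE for $b$: expanding with your auxiliary connection $\nabla$ and setting $w:=\partial_\epsilon\gamma-\rho(b)$, condition (ii) at a point reads $\alpha\big(\nabla_t b-\nabla_\epsilon a-T_\nabla(a,b)\big)=(\nabla_{w}\alpha)(a)$, where $T_\nabla(a,b)=\nabla_{\rho(a)}b-\nabla_{\rho(b)}a-[a,b]$ is the (tensorial) torsion. Since the value and the covariant derivative of $\alpha$ at a point can be prescribed independently, (ii) for all $\alpha$ is equivalent to the covariant ODE \emph{together with} the pointwise constraint $w\otimes a=0$; the first-derivative-of-$\alpha$ terms do not cancel, and they secretly encode the second anchor condition. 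Consequently you can only construct $b$ by solving the ODE part, and then your verification of $\rho(b)=\partial_\epsilon\gamma$ by ``testing (ii) with $\alpha=\dd_A h$'' is circular: for the ODE-defined $b$, condition (ii) at $\alpha=\dd_A h$ is not yet known -- it is equivalent to $(\nabla_{w}\dd_A h)(a)=0$, which is essentially the statement $w=0$ you are trying to prove. What your computation actually establishes is the (true but not needed) implication that any full morphism with $b_\epsilon(0)=0$ satisfies the second anchor identity. The missing step, present in \cite{CF1}, is a direct verification that $w$ solves a homogeneous linear first-order ODE in $t$ -- a short computation using $\rho(a_\epsilon(t))=\partial_t\gamma_\epsilon(t)$, equality of mixed partials of $\gamma$, and anchor--bracket compatibility -- whence $w\equiv 0$ from $w(0)=0$; only then does (ii) hold for all $\alpha$ and $f_{a,b}$ become a morphism. (Note that your part (2) does independently yield existence when $A$ is integrable, but the lemma is invoked in this paper precisely without any integrability assumption, so the general existence argument must be repaired as above.)
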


We now proceed to the proof of Theorem \ref{thm:integr_morph}.
\\

\begin{proof}[of Theorem \ref{thm:integr_morph}]
\noindent\emph{Step 1}. In this step, we show that the germ of $F$ around the unit section is unique.
Assume $F:G_1\dto G_2$ is a local groupoid map integrating $f$. Consider an open neighborhood $U$ of $M_1$ in $G_1$ such that for all $g\in U$ and all $s,t\in [0,1]$, we have that the following equality holds and, of course, all terms involved in the equality are defined:
\[F\left((sg)(tg)^{-1}\right)=F(sg)F(tg)^{-1}.\]
Using that $dF|_{A_1}=F_A=f$, applying $\frac{d}{ds}|_{s=t}$ to this equality, we obtain:
\[f\circ \theta_{G_1}\Big(\frac{d}{dt}(tg)\Big)=\theta_{G_2}\Big(\frac{d}{dt}F(tg)\Big).\]
This shows that the curve $t\mapsto F(tg)$ is the solution $t\mapsto k_t$ to the ODE \eqref{eq:ODE_maps}. The fact that $\theta_{G_2}:T_{h}^{\sigma}G_2\to A_{2,\tau(h)}$ is a linear isomorphism implies uniqueness of the solution to the ODE \eqref{eq:ODE_maps}, and therefore, uniqueness of $F$; namely $F(g)=k_1$, where $k_t$ is the (unique) solution of the ODE \eqref{eq:ODE_maps}.

\noindent\emph{Step 2}. We now go back to the definition of $F$ and show that, on an open neighborhood of $M_1$, the map $F$ satisfies $F^*(\theta_{G_2})=f\circ\theta_{G_1}$; in other words we have the following commutative diagram of Lie algebroid maps:
\begin{equation}\label{eq:claim}
\xymatrixrowsep{0.4cm}
\xymatrixcolsep{1.2cm}
\xymatrix{
 T^{\sigma}G_1 \ar@{-->}[r]^{\theta_{G_1}}\ar[d]_{dF} & A_1\ar[d]^{f} \\
 T^{\sigma}G_2\ar@{-->}[r]^{\theta_{G_2}} & A_2.}\end{equation}

First, we observe that for $g=x\in M_1$ the solution to the ODE is given by the constant path $k_t=x$, which is defined for all $t\in \R$. Therefore, there is a neighborhood $U$ of $M_1$ in $G$, such that for all $g\in U$ the ODE has a solution defined up to time $t=1$. On $U$ we thus have $F(g):=k_1$ well defined.

Let $g,v\in G_1$ be such that $\sigma_1(g)=\sigma_1(v)=x$. Taking $g$ in a sufficiently small neighborhood of $M_1$ and a sufficiently small open interval $J$ containing $0$, the tubular structure can be used to identify $g$ and elements of the form $\epsilon v$, where $\epsilon \in J$, as elements of $A_1|_x$. This, in turn, allows to make sense of the expression $t(g+\epsilon v)$ for any $t\in [0,1]$, obtaining then the following map:
\[v:[0,1]\times J\to G_1, \ \ (t,\epsilon)\mapsto v_{\epsilon}(t)=t(g+\epsilon v).\]
Note that, $d v_{\epsilon}(t)=\frac{d v_{\epsilon}(t)}{dt}dt +\frac{dv_{\epsilon}(t)}{d\epsilon} d\epsilon$ is a Lie algebroid morphism from $T([0,1]\times J)$ to $T^{\sigma}G_1$. Composing it with the Lie algebroid morphism $\theta_{G_1}:T^{\sigma}G_1\dto A_1$ and then with $f:A_1\to A_2$, we obtain that the following is also a Lie algebroid morphism:
\begin{equation}\label{map}f\circ \theta_{G_1}\Big(\frac{d}{dt} v_{\epsilon}(t)\Big)dt +f\circ \theta_{G_1}\Big(\frac{d}{d\epsilon} v_{\epsilon}(t)\Big)d\epsilon: T([0,1]\times J)\to A_2.\end{equation}

We now want to compute the components of \eqref{map} in terms of $F$. Denoting by $k_t(g)\in G_2$ the solution of \eqref{eq:ODE_maps}, we notice the following rescaling property: $k_{ts}(g) = k_s(t g)$ for $t,s \in [0,1]$ and $g\in U$. This follows by applying the chain rule on both sides of  \eqref{eq:ODE_maps} and by uniqueness of the solution of \eqref{eq:ODE_maps} for small $g$ (recall \emph{Step 1}).
Combining this property for $s=1$ with the definition of $F$ we get $F(v_\epsilon(t))=F(t(g+\epsilon v)) = k_t(g+\epsilon v)$ and the first component of \eqref{map} is then given by
\[f\circ \theta_{G_1}\Big(\frac{d}{dt} v_{\epsilon}(t)\Big)=\theta_{G_2}\Big(\frac{d}{dt} F(v_{\epsilon}(t))\Big).\]
Since $v_{\epsilon}(0)=x$, the second component in \eqref{map} vanishes for $t=0$, and therefore, by part (2) of Lemma \ref{Lemma_CF} with $G=G_2$ and $g_\epsilon(t)=F(v_\epsilon(t))$, it is given by:
\[f\circ \theta_{G_1}\Big(\frac{d}{d\epsilon} v_\epsilon(t)\Big)=
\theta_{G_2}\Big(\frac{d}{d\epsilon} F(v_{\epsilon}(t))\Big).\]
In particular, for $t=1$ and $\epsilon=0$, we obtain:
\[f\circ \theta_{G_1}\Big(\frac{d}{d\epsilon} (g+\epsilon v)|_{\epsilon=0}\Big)=
\theta_{G_2}\Big(\frac{d}{d\epsilon} F(g+\epsilon v)|_{\epsilon=0}\Big).\]
Since $v$ was arbitrary in the source fiber of $g$, this is precisely the commutativity of \eqref{eq:claim} at $g$.

\noindent\emph{Step 3}. We show that $F$ is multiplicative. Let $g,h\in G_1$ be two composable arrows that are close enough to the unit section. We show that the curves $t\mapsto F(tg)F(h)$ and $t\mapsto F((tg)h)$, which start at $F(h)$, both satisfy the ODE:
\[\theta_{G_2}\Big(\frac{d}{dt}k_t\Big)=f\circ \theta_{G_1}\Big(\frac{d}{dt}(tg)\Big);\]
hence the two curves must coincide for $t=1$, i.e.\ $F(gh)=F(g)F(h)$. For this we use right-invariance of both Maurer-Cartan forms and the relation $f\circ \theta_{G_1}=F^*(\theta_{G_2})$:
\[\theta_{G_2}\Big(\frac{d}{dt}F((tg)h)\Big)=f\circ \theta_{G_1}\Big(\frac{d}{dt}(tg)h\Big)=f\circ \theta_{G_1}\Big(\frac{d}{dt}(tg)\Big);\]
\[\theta_{G_2}\Big(\frac{d}{dt}F(tg)F(h)\Big)=\theta_{G_2}\Big(\frac{d}{dt}F(tg)\Big)=
f\circ \theta_{G_1}\Big(\frac{d}{dt}(tg)\Big).\]
\end{proof}

\begin{remark}
 The role of the tubular structure on $G_1$ above is to provide a path $g(t)$ joining $\sigma(g)$ to $g$ within the underlying source fiber, namely, $g(t) = tg$. For any such path, there is an associated ODE generalizing \eqref{eq:ODE_maps} which (uniquely) characterizes the value $F(g)$ of an integration of $f$, for $g$ close enough to the identities.
\end{remark}

\subsection{Local integration of cochains} \label{sec:cochains}

In this section we show how a tubular structure can be used to integrate Lie algebroid cochains to local Lie groupoid cochains. Let $G\rightrightarrows M$ be a local Lie groupoid endowed with a tubular structure with scalar multiplication $g \mapsto tg$, and let $A$ denote its Lie algebroid. 
Recall \cite{Cr,L-BM,WX} that Lie algebroid $p$-cochains are sections of $\wedge^p A^*$, while local Lie groupoid cochains are smooth functions defined on (small) composable arrows $f:G^{(p)} \dto \R$. The \emph{van Est map}, applied to such a local Lie groupoid cochain $f$, is defined as follows \cite{WX}:  
\[ \VE(f)(a_1 \comas a_p)(x) = \sum_{\pi \ perm.} \mathrm{sign}(\pi) D_{a_{\pi(p)}}\cdots D_{a_{\pi(1)}}f|_x, \ a_i \in \Gamma (A),\]
where $D_a f : G^{(p-1)}\dto \R$ is defined by $D_af(g_2 \comas g_p) = \frac{d}{d\e}|_{\e=0}f(h(\e),g_2 \comas g_p)$ for any curve $h(\e)\in \s^{-1}(\t(g_2))$ starting at $h(0)=\t(g_2)$ with velocity $h'(0)=a(\t(g_2))$. If $f$ is \emph{normalized}, i.e. $f(g_1\comas g_p)=0$ whenever one of the $g_i$'s is a unit, then $\VE(f)(a_1 \comas a_p)$ is $C^\infty(M)$-linear in the sections $a_i\in \Gamma (A)$, so that $\VE(f)$ indeed defines an element of $\Gamma (\wedge^p A^*)$.

We now use the tubular structure to produce a right inverse for $\VE$.
Given a $p$-tuple $(g_1\comas g_p)\in G^{(p)}$ of small composable arrows in $G$, we recursively define maps $\gamma_{g_1\comas g_p}: I^p \to \s^{-1}(\s(g_p))\subset G$, with $I=[0,1]$, by
\[ \gamma_{g_1}(t_1) = t_1 g_1, \ \gamma_{g_1 \comas g_{p}}(t_1 \comas t_{p})= t_{p} \mu(\gamma_{g_1 \comas g_{p-1}}(t_1 \comas t_{p-1}), g_{p}).\]
Let the map $\Psi: \Gamma (\wedge^p A^*) \to \{ f: G^{(p)}\dto \R\}$ be defined by
\[ \Psi(\alpha)(g_1 \comas g_p) =  \int_{I^p} \alpha\left( \theta(\partial_{1}\gamma_{g_1 \comas g_p}) \comas \theta({\partial}_{p}\gamma_{g_1 \comas g_p}) \right)  dt_1\dots dt_p = \int_{I^p} (\theta\circ d\gamma_{g_1 \comas g_p})^*\alpha,\]
where $\theta:=\theta_G$ is the Maurer-Cartan form of $G$ and $\partial_j \gamma$ is the partial derivative of $(t_1 \comas t_p) \mapsto \gamma(t_1 \comas t_p)$ with respect to its $j$-th variable.
Notice that, when one of the $g_i$'s is a unit, then $\Psi(\alpha)(g_1,..,g_p)=0$ since $\gamma_{g_1 \comas g_p}$ becomes a \emph{degenerate} $p$-cube as it factors through a map $I^p \to I^{p-1}$ and $\Omega^p(I^{p-1})=0$. Hence, $\Psi(\alpha)$ is normalized for any $\alpha\in \Gamma (\wedge^p A^*)$.

\begin{example} \label{ex:1cochains}\emph{($1$-cochains)}
Let $\alpha \in \Gamma (A^*)$ be an algebroid 1-cochain. Since $\gamma_g(t)=tg$, we have
\begin{equation}\label{eq:our_formula}\Psi(\alpha)(g) = \int_0^1 \alpha \circ \theta\left(\frac{d}{dt}tg\right) \ dt.\end{equation}
To differentiate this local groupoid $1$-cochain, we consider the curve $h(\e) = \e a$ so that
\[ \VE(\Psi(\alpha))(a) = \frac{d}{d\e}|_{\e=0}  \int_0^1 \alpha \circ \theta\left(\frac{d}{dt}t\e a\right) \ dt =  \alpha \circ \theta\left(\frac{d}{du}ua\right)|_{u=0}  = \alpha(a).\]
When $\alpha$ is \emph{exact}, namely $\alpha = \rho^*df$ ($=d_A f$ below) for $f\in C^\infty(M)$, then
\[ \Psi(d_Af)(g) =  \int_0^1 df \circ \rho \circ \theta \left(\frac{d}{dt}tg\right) \ dt = \int_0^1 \frac{d}{dt}f(\t(tg)) \ dt = f(\t(g)) - f(\s(g)),\]
where, in the last step, we used that the tubular structure $\varphi$ satisfies $\s\circ \varphi=q:A \dto M$.

 In \cite[Theorem 1.3]{WX} the authors prove that the van Est map is an isomorphism in degree 1 cohomology if $G$ is source simply connected, using an explicit formula $\Lambda (\alpha)$ for the inverse: $\Lambda (\alpha)(g)=\int _{0}^1\alpha^R(\frac{d}{dt}g(t))\ dt$, where the integration is over any path $g(t)$ in the $\sigma$-fiber joining $1_{\sigma(g)}$ and $g$, and $\alpha^R\in \Gamma((T^\sigma G)^*)$ is the right invariant foliated 1-form induced by $\alpha$\footnote{Due to the conventions for multiplication in \cite{WX}, left translation in \cite{WX} becomes right translation with our conventions.}. Of course the path $g(t):t\mapsto tg$ does the job and we recover the formula \eqref{eq:our_formula}.
\end{example}

The two properties described in the previous example generalize to arbitrary $p$-cochains:

\begin{proposition}\label{prop:ve}
The map $\Psi$ defined above satisfies
\begin{enumerate}
\item \label{e1}  $\Psi(d_A\alpha) = - \delta \Psi(\alpha)$, where  $d_A$ denotes the Chevalley-Eilenberg differential associated to $A$ and $\delta$ the differentiable cohomology differential for $G$ (see \cite{Cr} and the proof below);
\item \label{e2}  $\VE(\Psi(\alpha)) = \alpha$ for all $\alpha \in \Gamma (\wedge^p A^*)$.
\end{enumerate}
\end{proposition}
\begin{proof}
To show \ref{e1}, one first observes that $\theta\circ d\gamma_{g_1\comas g_p}: T I^p \to A$ is an algebroid morphism, being a composition of such; hence, it intertwines $d_A$ and the de Rham differential on $\Omega(I^p)$. Applying Stokes' theorem in the computation of $\Psi(d_A \alpha)$ yields
\[ \Psi(d_A\alpha)(g_1\comas g_{p+1}) = \int_{\partial I^{p+1}} (\theta\circ d\gamma_{g_1 \comas g_{p+1}})^*\alpha.\]
 The above integral splits as a sum over the faces of the $(p+1)$-cube $I^{p+1}$ which are determined by $t_i =0$ or $t_i =1$, for $i=1,..,p+1$, each one with its induced orientation. Let us then consider the restriction of $\gamma_{g_1\comas g_{p+1}}$ to such faces following its definition. The face $t_1=0$ yields the (negatively oriented) $p$-cube $\gamma_{g_2\comas g_{p+1}}$ since zero is mapped to the groupoid units through the tubular structure. The face $t_1=1$ yields the (positively oriented) $p$-cube $\gamma_{\mu(g_1,g_2),g_3 \comas g_{p+1}}$. For $1<i\leq p+1$, setting $t_i=0$ yields a degenerate $p$-cube since the underlying map does not depend on $t_j$ for $j<i$, the corresponding integrand thus vanishes and hence these terms do not contribute. The faces $t_i=1$ for $1<i<p+1$ yield, using associativity of $\mu$, the $p$-cubes $\gamma_{g_1\comas \mu(g_i,g_{i+1})\comas g_{p+1}}$ in which $g_i$ is replaced by $\mu(g_i,g_{i+1})$. Finally, the face $t_{p+1}=1$ yields $(t_1\comas t_p) \mapsto \mu(\gamma_{g_1\comas g_p}(t_1\comas t_p), g_{p+1})$ which, since $\theta$ is right invariant (and taking into account the induced orientation), contributes to the boundary integral as $(-1)^{p+2} \Psi(\alpha)(g_1\comas g_p)$. 

In conclusion, the sum over boundary terms yields
directly $-\delta \Psi(\alpha)$:
\[ \int_{\partial I^{p+1}} (\theta\circ d\gamma_{g_1 \comas g_{p+1}})^*\alpha  = - \Psi(\alpha)(g_2\comas g_{p+1}) + \sum_{i=1}^{p} (-1)^{i+1} \Psi(\alpha)(g_1\comas \mu(g_i,g_{i+1})\comas  g_{p+1}) + \]\[+ (-1)^{p+2} \Psi(\alpha)(g_1\comas g_p) = -(\delta \Psi(\alpha))(g_1\comas g_{p+1}),\]
and hence \ref{e1} is proven.

For \ref{e2}, we need to recursively differentiate $f=\Psi(\alpha)$ along sections $a_1,..,a_p \in \Gamma (A)$. Since both sides of the desired equality are multilinear forms on $A$, it is enough to show that they agree when evaluated on small enough $a_1\comas a_p \in \Gamma (A)$. For such sections, the underlying differentiation is carried out in Lemma \ref{lma:DvE} of Appendix \ref{appendix}. Taking $k=p$ in Lemma \ref{lma:DvE} and noting that by the properties of the tubular structure we have $v^a: = \theta\left(\frac{d}{dt}ta|_{t=0}\right)=a $ (cf.\ Definition \ref{eq:defv}), we obtain
\[ D_{a_p}\cdots D_{a_1} \Psi(\alpha) = \frac{1}{p!} \alpha(a_1\comas a_p) \in C^\infty(M).\]
Then \ref{e2} follows directly from the above as the $1/p!$ cancels the sum over permutations in $\VE(\Psi(\alpha))$ since $\alpha$ is already alternating.
\end{proof}

 The map $\Psi$ above thus provides an explicit geometric description for integration of cochains, inverting differentiation through the van Est map and descending to cohomology.
The fact that the van Est map yields an isomorphism in cohomology for any local Lie groupoid was shown in \cite{L-BM} where, moreover, a retraction along the source  fibers is shown to induce an inverse for $\VE$ at the level of cochains by means of homological perturbation theory.

\section{The local Lie groupoid associated to a spray}\label{sec:spray}

In this section we describe an explicit, finite-dimensional construction of a local Lie groupoid integrating a given Lie algebroid, which uses a Lie algebroid spray. This construction provides a proof of the fact that the Lie functor, taking local Lie groupoids to Lie algebroids, is essentially surjective, thus, by the previous section, an equivalence of categories.


\subsection{Lie algebroid sprays}
We first recall the following notion:

\begin{definition}
Let $(A,[\cdot,\cdot],\rho)$ be a Lie algebroid. Let $q:A\to M$ denote the bundle projection, and let $m_t:A\to A$ denote scalar multiplication by $t\in \R$. A \textbf{Lie algebroid spray} for $A$ is a vector field $V$ on the manifold $A$ which satisfies:
\begin{enumerate}
\item $V$ is homogeneous of degree one: $m_t^*(V)=tV$ for all $t\neq 0$;
\item $V$ lifts the anchor of $A$ in the sense that: $dq(V_a)=\rho(a)\in TM$, for all $a\in A$.
\end{enumerate}
\end{definition}

The local flow of the spray will be denoted by $\phi_V^t\equiv \phi_V^{t,0}$. The second condition defining $V$ implies that $a(t):= \phi_V^t(a)\in A$ satisfies the condition $\frac{d}{dt}q( a(t)) = \rho(a(t))$ of an \emph{$A$-path} (in the sense of \cite{CF1}) for any initial condition $a\in A$. 

\begin{remark}
\begin{enumerate}[leftmargin=*]
\item The first condition implies that $V$ vanishes along the zero section of $A$. Hence, its flow is defined up to time $t=1$ on an open neighborhood of the zero section. We thus obtain the map $\hexp: A \dto P(A),\   a \mapsto (t\mapsto \phi_V^t(a))$ discussed in the last part of the introduction. 
\item A Lie algebroid spray is equivalent to a torsion-free $A$-connection on $A$ (for these notions, see \cite{Fe1}). Let $x^i$ denote local coordinates on $M$ and $u^a$ fiberwise linear coordinates on $A$. Then any spray has the local expression
\[V = \rho^i_a(x) u^a \partial_{x^i} + \Gamma^{c}_{ab}(x) u^au^b \partial_{u^c},\]
where $\rho^i_a$ denote the local coefficients of the anchor map $\rho:A \to TM$ and $\Gamma^{c}_{ab}$ define the Christoffel symbols for the $A$-connection. The torsion vanishes since only the symmetric part of these symbols contributes non-trivially to $V$.
\item Sprays always exist: for example, define $V_{a}\in T_a A$ to be the horizontal lift of $\rho(a)\in T_{q(a)}M$ with respect to a fixed linear ($TM$-)connection $\nabla$ on $A$.
\item That $V$ is homogeneous of degree one is equivalent to the following property of its local flow:
\begin{equation}\label{eq:flowV}\phi_V^t(sa)=s\phi_V^{st}(a).\end{equation}
\item The flow of a spray fixes points of the zero section $M\subset A$, namely $\phi_V^t(0_x)=0_x$. 
 Using the natural decomposition $TA|_M= TM \oplus A$ into vectors tangent to the zero section plus vertical vectors, the differential of $\phi_V^t$ at such points yields
\begin{equation}\label{eq:flowVatzero}
 d\phi_V^t: TA|_{M} = TM \oplus A \to TM \oplus A, \ \ ( u, a) \mapsto ( u + t \rho(a),a),
\end{equation}
where $d\phi_V^t(a)$ is computed using equation \eqref{eq:flowV} and the condition $\frac{d}{dt}q( \phi_V^t(a)) = \rho(\phi_V^t(a))$:
$$\frac{d}{ds}\phi_V^t(sa)|_{s=0}=\frac{d}{ds}s\phi_V^{ts}(a)|_{s=0}=\phi_V^{0}(a)+\frac{d}{ds}q(\phi_V^{ts}(a))|_{s=0}=a+\frac{d}{ds}q(\phi_V^{s}(ta))|_{s=0}=a+\rho(ta)$$
\item Consider the pullback Lie algebroid of $A$ via the submersion $q:A\to M$ (see e.g.\ \cite{HiggMack})
\[q^{!}A:=TA\times_{TM}A=\{(u,a)\in TA\times A \ : \ dq(u)=\rho(a)\}.\]
The Lie algebroid $q^!A$ is an extension of $A$ by the vertical foliation $T^qA:=\ker(dq)$: 
\begin{equation}\label{eq:diagram_extension}
\xymatrixrowsep{0.4cm}
\xymatrixcolsep{1.2cm}
\xymatrix{
 T^{q}A \ar@{^{(}->}[r]\ar[d]& q^!A \ar[r]^{\mathrm{pr}_2}\ar[d]&A\ar[d]^q \\
 A\ar[r]^{\mathrm{id}}&A\ar[r]^q & M,}
 \end{equation}
where the inclusion $T^qA\hookrightarrow q^!A$ sends $v\in T^q_aA$ to $(v,0_{q(a)})\in (q^!A)_a$. 
A spray $V$ on $A$ induces a section of $q^!A$,
 \[\widehat{V}\in \Gamma(q^!A), \ \ \ \widehat{V}_a:=(V_a,a), \ \ a\in A,\]
 which satisfies $\mathrm{pr}_2\circ \widehat{V}=\mathrm{id}_A$. The Euler vector field can be regarded as a section of $q^!A$:
 \[E\in \Gamma(T^qA)\subset \Gamma(q^!A),\ \ \ E_a:=\frac{d}{d t}(e^ta)\big|_{t=0}, \ \ a\in A.\]
 The homogeneity of $V$ is expressed as a simple equation in the Lie algebra $\Gamma(q^!A)$:
 \begin{equation}\label{eq:homo}
 [E,\widehat{V}]=\widehat{V}.
 \end{equation}
\end{enumerate}
\end{remark}

\begin{example}\label{ex:alg}
 Let $A=\g$ be a Lie algebra (i.e.\ a Lie algebroid over a point $M=pt$). Sprays are simply $1$-homogeneous vector fields on $\g$, since the second condition in the definition of $V$ is empty. Denoting by $u^a$ linear coordinates on $\g$, a spray $V$ must be of the form $ V = \Gamma^{c}_{ab} u^au^b \partial_{u^c}$ with $\Gamma^{c}_{ab}\in \mathbb{R}$. In particular, $V=0$ defines a spray for $\g$ whose flow is the identity $\phi_V^t=\mathrm{id}_{\g}$.
\end{example}

\begin{example}
An ordinary geodesic spray $V\in \X(TM)$ associated to a Riemannian metric on $M$ defines a Lie algebroid spray for the tangent Lie algebroid $A=TM$.
\end{example}

\subsection{Realization forms}

Let $(A,[\cdot,\cdot],\rho)$ be a Lie algebroid with bundle projection $q:A\to M$ and let $V$ be a Lie algebroid spray on $A$. 
A crucial step in the definition of the multiplication of the local groupoid is the construction of the corresponding right-invariant Maurer-Cartan form associated to $V$ obtained in \cite{Ori}, which is called here the {\it realization form}, and which plays the role of the Maurer-Cartan form \eqref{eq:MC_diagram} of this local groupoid  (see Theorem \ref{theorem:thm2}). We give here an independent account of the construction in \cite{Ori}, as well as a different proof, relying on the Lie algebroid $q^!A\to A$ and thinking of an infinitesimal analogue of a factorization like \eqref{eq:division_diagram} of the division map.

Let $\phi_{\widehat{V}}^t\equiv \phi_{\widehat{V}}^{t,0}$ denote the flow of the Lie algebroid (time-independent) section $\widehat{V}\in \Gamma(q^!A)$ (recall equation \eqref{eq:flowofsec}). Since $\widehat{V}$ is sent by the anchor $\mathrm{pr}_1:q^!A\to TA$ to $V$, the flow $\phi_{\widehat{V}}^t$ is a Lie algebroid automorphism covering $\phi_{V}^t$.

\begin{definition}
The \textbf{realization form associated to $V$}, $\theta:T^{q}A \dto A$, is the composition of the following Lie algebroid maps:
\begin{equation}\label{defi:MC_diagram}
\xymatrixrowsep{0.4cm}
\xymatrixcolsep{1.2cm}
\xymatrix{
 T^{q}A \ar@{^{(}->}[r]\ar[d]& q^!A \ar@{-->}[r]^{\phi_{\widehat{V}}^1}\ar[d]&
 q^!A \ar[r]^{\mathrm{pr}_2}\ar[d]&
 A\ar[d]^q \\
 A\ar[r]^{\mathrm{id}}&A\ar@{-->}[r]^{\phi_{V}^1}&A \ar[r]^q & M}
 \end{equation}
\end{definition}
Thus, if $U$ an open neighborhood of $M$ in $A$ on which $\phi_V^t$ is defined for $t\in [0,1]$, then the realization form is the Lie algebroid morphism covering $\tau=q\circ\phi_{V}^1:U\to M$:
\begin{equation}\label{defi:MC_of_V }
\begin{array}{cc}
\xymatrixrowsep{0.4cm}
\xymatrixcolsep{1.2cm}
\xymatrix{
 T^{q}A|_U \ar[r]^{\theta}\ar[d] & A\ar[d] \\
 U\ar[r]^{\tau} & M}, &
\theta_a(v):=\mathrm{pr}_2\circ \phi_{\widehat{V}}^1(v).\end{array}
\end{equation}
for $a\in U$ and $v\in T^q_{a}A \cong A_{q(a)}$.

Next, we derive some immediate properties of $\theta$. Using equation \eqref{eq:homo} and that the operation of pulling sections back along $\phi_{\widehat{V}}^t$ behaves like exponentiating the derivation $[\widehat{V},\cdot]$ (recall equation \eqref{eq:flowofsec}), we obtain:
\[\frac{d}{dt}\left((\phi_{\widehat{V}}^t)^*(E+t\widehat{V})\right)=(\phi_{\widehat{V}}^t)^*([\widehat{V},E]+\widehat{V})=0\]
which implies that
\[(\phi_{\widehat{V}}^t)_*(E)=E+t\widehat{V}.\]
For $t=1$, this yields:
$\phi_{\widehat{V}}^1(E_a,0_{q(a)})=(E_{\phi^1_V(a)}+V_{\phi^1_V(a)},\phi^1_V(a))\in (q^!A)_{\phi^1_V(a)},$
for all $a\in U$. Projecting this onto the second component gives:
\begin{equation}\label{eq:theta_Euler}
\theta_a(a)=\phi_V^1(a), \ \ \textrm{for all}\ a\in U,
\end{equation}
where we used the identification $T^q_aA=A_{q(a)}$, under which $E_a=d/dt|_{t=0}(a + t a) \equiv a$. Equation \eqref{eq:theta_Euler} is a key relation between $\theta$ and $V$ to be repeatedly used in this section. 

A direct corollary comes from replacing $a$ by $t a$ in equation \eqref{eq:theta_Euler}, and using (\ref{eq:flowV}) to obtain:
\begin{equation}\label{eq:theta_Euler2}
\theta_{t a}\Big(\frac{d}{dt}(ta)\Big)=\theta_{t a}(a)=\phi_V^{t}(a).
\end{equation}
In the limit $t \rightarrow0$, we obtain that $\theta_{q(a)}(a)=a$, i.e.\ $\theta$ is the identity along $M$. Thus, by shrinking $U$ we may assume that $\theta$ is a fibre-wise linear isomorphism.

Compared to \cite{Ori}, our approach of introducing $\theta$ using $q^!A$ has the advantage that it makes it obvious that $\theta$ is a Lie algebroid map, being a composition of such. To verify that we indeed obtain the same object, one can recall from \cite{Ori} that the equation (\ref{eq:theta_Euler}) characterizes the Lie algebroid map $\theta$ uniquely around $M$.

The following lemma further characterizes $\theta$ as part of a Lie algebroid morphism, and recovers the formula from \cite{Ori}.

\begin{lemma}\label{lemma:old_theta}
Let  $U$ denote an open neighborhood of $M$ in $A$ on which $\phi_V^t$ is defined for $t\in [0,1]$, $a\in U$ and $v\in A_{q(a)}$. Consider a small enough interval $J$ around $0$ such that $t(a+\epsilon v)\in U$, for all $(t,\epsilon)\in [0,1]\times J$. Then, the map
\[\phi_V^t(a+\epsilon v)\ dt+ \theta_{t(a+\epsilon v)}(tv)\ d\epsilon: T([0,1]\times J)\rmap A\]
defines a Lie algebroid morphism over $[0,1]\times J \to M, (t,\epsilon) \mapsto \gamma_\epsilon(t):= q(\phi_V^t(a + \e v))$.
In particular, from  \eqref{eq:int_formula} we get that
\begin{equation}\label{eq:int_formula_theta}
\theta_{a}(v)=\int_0^1\phi_{\alpha_0}^{1,s}(\gamma_{0}(s))\frac{d \alpha_{\epsilon}}{d\epsilon}|_{\e=0}(s,\gamma_{0}(s))d s,
\end{equation}
where $\alpha_\e$ is a family of compactly supported, time-dependent sections of $A$ such that $\alpha_\e(t,\gamma_\e(t)) = \phi_V^t(a + \e v)$, with 
$\phi_{\alpha_\e}^{1,s}:A_{\gamma_\e(s)} \to A_{\gamma_\e(1)}$ denoting the flow of $\alpha_\e$.
\end{lemma}
\begin{proof}
Differentiating the map
$\hg:[0,1]\times J\rmap A_{q(a)},\  \hg(t,\epsilon):=t(a+\epsilon v)$,
we obtain a Lie algebroid morphism covering $\hg$
\[d\hg=\partial_{t}\hg\ dt +\partial_{\epsilon}\hg\ d\epsilon:T([0,1]\times J)\to T^qA.\]
Composing this with $\theta$, we obtain a Lie algebroid morphism into $A$:
\[\theta\circ d\hg=\theta_{\hg}(\partial_{t}\hg)\ dt +\theta_{\hg}(\partial_{\epsilon}\hg)\ d\epsilon:T([0,1]\times J)\to A\]
covering $\tau\circ \hg$. The first statement follows because
$\theta_{\hg}(\partial_t\hg)=\phi^t_V(a+\epsilon v)$ by \eqref{eq:theta_Euler2}
and $\partial_{\epsilon}\hg=tv$. Formula \eqref{eq:int_formula_theta} for $\theta_a(v)$ then follows directly by evaluating \eqref{eq:int_formula} at $t=1$ and $\e=0$.
\end{proof}

\begin{example}\label{ex:lie2}
 Let $\g$ be a Lie algebra endowed with the zero spray $V=0$. We evaluate the associated realization form $\theta: T\g \dto \g$ using formula \eqref{eq:int_formula_theta}. Given $a\in \g$ close to zero and $v\in \g$, since $\g$ is a Lie algebroid over a single point, then $\alpha_\e(t) = \phi_V^t(a+\e v) = a + \e v$ defines a section that can be used to evaluate \eqref{eq:int_formula_theta}. In particular, $\alpha_0(t) = a$ and $\frac{d}{d\e}|_{\e=0} \alpha_\e(t) = v$ (both time-independent). Moreover, using the definition \eqref{eq:flowofsec} of the associated flow, it follows that
 \[\phi_{\alpha_0}^{t,s} = e^{-(t-s) \ad_a}: \g \to \g.\]
 We can then directly compute $\theta$ using \eqref{eq:int_formula_theta}, yielding the well known expression 
 \begin{equation}\label{eq:thetaonLie}
 \theta_a(v) = \int_0^1 e^{-(1-s) \ad_a}(v) \  d s = \frac{e^{-\ad_a}-I}{-\ad_a}(v). 
 \end{equation}
For this see \cite[Section 1.5]{DK}, where $\ad_a$ appears instead of $-\ad_a$ due to a different convention for the Lie bracket; namely, our convention for the Lie bracket comes from right-invariant vector fields, which leads to $[a,b]=-\ad_a b$ for $a,b \in \g$, where $\ad_a b = \frac{\partial}{\partial t \partial \epsilon}|_{t,\epsilon =0} \exp(ta)\exp(\epsilon b) \exp(ta)^{-1}$ denotes the standard adjoint action (as opposed to \cite{DK} which uses left-invariant vector fields).
\end{example}

\subsection{The construction of the spray groupoid}\label{subsection:spray_groupoid}

For the rest of this section let $(A,[\cdot,\cdot],\rho)$, $q:A\to M$ be a Lie algebroid, and $V$ be a Lie algebroid spray on $A$. Using $V$ we shall construct a local Lie groupoid $G_V\rightrightarrows M$ integrating $A$. The total space is $G_V=A$, the space of units is $M$ identified with the zero section of $A$, and the source map is the bundle projection $\sigma=q:A\to M$. The target map and the inversion map are defined on the open neighborhood of $M$ on which the flow of $V$ is defined up to $t=1$:
\begin{align*}
&\tau:G_V\dto M,  \ \ \ \tau(a):=q(\phi^1_V(a)), \\
& \iota:G_V\dto G_V,  \ \ \ \iota(a):=-\phi^1_V(a)=\phi_V^{-1}(-a) \ \ (\textrm{see}\ (\ref{eq:flowV})).
\end{align*}
The groupoid axioms involving only these maps are easily verified: $\sigma$ and $\tau$ are surjective submersions, and for $x\in M$ and $a\in A$
\begin{align*}
&\tau(x)=q(\phi_V^1(0_x))=q(0_x)=x,\\
&\iota(x)=-\phi_V^1(0_x)=x,\\
&\sigma\circ\iota(a)=q(-\phi_V^1(a))=q(\phi_V^1(a))=\tau(a),\\
&\iota\circ\iota(a)=-\phi^1_V(-\phi^1_V(a))=\phi^{-1}_V\circ\phi^1_V(a)=a,
\end{align*}
where we have used that $\phi^t_V$ fixes $M$, and equation \eqref{eq:flowV}.

The main theorem of this subsection is that $G_V$ carries a natural multiplication, defined by the ODE \eqref{eq:ODE} below, that makes it into a local Lie groupoid integrating $A$, and has Maurer-Cartan form the realization form $\theta$ associated to $V$. The starting point is to observe that there is a neighborhood $U_{\mu}$ of $M$ in $A\tensor[_\sigma]{\times}{_\tau}A$, such that for all $(a,b)$ in this neighborhood the ODE
\begin{equation}
\label{eq:ODE}
\theta\Big(\frac{d}{dt}k_t\Big)=\phi_V^t(a),  \ \ k_0=b,
\end{equation}
has its unique solution $t\mapsto k_t=k(t,a,b)\in A_{\sigma(b)}$ defined for all $t\in[0,1]$. Indeed, 
we know that there is a neighbourhood $U$ of $M\subset A$ on which  bundle map $\theta: T^qA|_U \to A$ is a fiberwise linear isomorphism. Hence, for $a,b$ close enough to $M$, the ODE has a unique solution defined for $t$ in a neighborhood of $0$. Now, when $a=x\in M$, we have that $k_t=b$ is a solution for all $t\in \R$. For $b=x\in M$, $k_t=ta$ is a solution for all $t$ such that $\theta_{ta}$ is defined; this follows from equation (\ref{eq:theta_Euler2}). This implies the existence of such an open neighborhood $U_{\mu}$.

\begin{theorem}\label{theorem:thm2}
The maps $\sigma=q$, $\tau=q\circ \phi_V^1$, $\iota=-\phi_V^1$, together with the multiplication map 
\[\mu: G_{V}\tensor[_\sigma]{\times}{_\tau}G_V\dto G_V, \ \ \ \mu(a,b):=k(1,a,b),\]
where $k(t,a,b)$ is the solution of the ODE \eqref{eq:ODE}, 
define the structure maps of a local Lie groupoid on $(G_V,M,\sigma)=(A,M,q)$. The Maurer-Cartan form of this local groupoid coincides with the realization form $\theta$ associated to $V$; in particular, $G_V$ is a local Lie groupoid integrating $A$.
\end{theorem}

\begin{definition}\label{def:spray_groupoid} The local Lie groupoid $G_V$ is called the \textbf{spray groupoid} of $A$ associated to $V$.
\end{definition}

First, we detail immediate properties of $\mu$ and then we prove Theorem \ref{theorem:thm2}. As we saw before, $k(t,a,\sigma(a))=ta$ and $k(t,\tau(b),b)=b$, thus, the unit axioms hold:
\[\mu(a,\sigma(a))=a,\ \ \ \ \mu(\tau(b),b)=b.\]
Since $k_t\in A_{\sigma(b)}$ and $\tau(k_t)=q(\phi_V^t(a))$, at $t=1$, we obtain the axioms:
\[\sigma(\mu(a,b))=\sigma(b), \ \ \ \tau(\mu(a,b))=\tau(a).\]

Next, we describe the path $k(t,a,b)$ in terms of $\mu$:
\begin{lemma}\label{lema: rescaling-multiplication}
For $(a,b)\in U_{\mu}\subset G_{V}\tensor[_\sigma]{\times}{_\tau}G_V$ and $t\in [0,1]$, we have that $k(t,a,b)=\mu(ta,b)$.
\end{lemma}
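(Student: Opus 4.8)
The plan is to prove the identity by a time-rescaling argument for the defining ODE \eqref{eq:ODE}, using the homogeneity property \eqref{eq:flowV} of the spray flow together with the fiberwise linearity of $\theta$ and uniqueness of solutions. Since $\mu(ta,b)=k(1,ta,b)$ by definition, where $s\mapsto k(s,ta,b)\in A_{\sigma(b)}$ solves $\theta(\tfrac{d}{ds}k(s,ta,b))=\phi_V^s(ta)$ with $k(0,ta,b)=b$, it suffices to show that $k(1,ta,b)=k(t,a,b)$. I first note that $(ta,b)$ is again a composable pair, since $\sigma(ta)=q(a)=\tau(b)$, so the left-hand side is well-defined.

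The key step is to consider the reparametrized curve $s\mapsto c_s:=k(st,a,b)$, which lies in the fiber $A_{\sigma(b)}$ and is defined for all $s\in[0,1]$ because $r\mapsto k(r,a,b)$ is defined on $[0,1]\supset [0,t]$ and remains inside the neighborhood $U$ on which $\theta$ is a fiberwise isomorphism. Differentiating and using that $\theta$ is linear on each vertical fiber gives
\[\theta\Big(\frac{d}{ds}c_s\Big)=t\,\theta\Big(\frac{d}{dr}k(r,a,b)\big|_{r=st}\Big)=t\,\phi_V^{st}(a),\]
and the homogeneity relation \eqref{eq:flowV} yields $t\,\phi_V^{st}(a)=\phi_V^s(ta)$. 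Together with $c_0=k(0,a,b)=b$, this shows that $c_s$ satisfies exactly the ODE that defines $k(s,ta,b)$, with the same initial condition.

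Finally, uniqueness of solutions of \eqref{eq:ODE}---which holds because $\theta$ restricts to a linear isomorphism on each vertical fiber $T^{\sigma}_{k}G_V\cong A_{\sigma(b)}$---forces $c_s=k(s,ta,b)$ for all $s\in[0,1]$. Evaluating at $s=1$ gives $k(t,a,b)=c_1=k(1,ta,b)=\mu(ta,b)$, which is the claim. The argument is essentially routine; the only point needing a little care is verifying that the rescaled path stays within the domain where $\theta$ is defined and invertible, which is guaranteed by the hypothesis $(a,b)\in U_{\mu}$.
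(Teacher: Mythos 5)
Your proof is correct and follows essentially the same route as the paper: a time-rescaling of the defining ODE, the chain rule combined with fiberwise linearity of $\theta$, the homogeneity relation \eqref{eq:flowV}, and uniqueness of solutions. The only difference is notational (you fix $t$ and run the parameter $s$, whereas the paper fixes $s$ and runs $t$), so nothing further is needed.
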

\begin{proof}
Fix $s\in [0,1]$. Using (\ref{eq:flowV}), we calculate
\[\theta\Big(\frac{d}{dt}k(st,a,b)\Big)=s \theta\Big(\frac{d}{du}k(u,a,b)\big|_{u=st}\Big)=s\phi_V^{st}(a)=\phi^t_V(sa).
\]
Thus, the curves $t\mapsto k(st,a,b)$ and $t\mapsto k(t,sa,b)$ satisfy the same ODE and start at $b$. So they are equal and, for $t=1$, we obtain the conclusion $\mu(sa,b)=k(1,sa,b)=k(s,a,b)$.
\end{proof}

In particular, we obtain that
\begin{equation}\label{eq:theta}
\theta\Big(\frac{d}{dt}\mu(ta,b)\Big)=\theta\Big(\frac{d}{dt}ta\Big)=\phi_V^t(a).
\end{equation}
The following result shows that $\theta$ plays the role of the Maurer-Cartan form of $\mu$:

\begin{lemma}\label{lemma:right inv}
 $\theta$ is right invariant for $\mu$:
\[\theta\Big(\frac{d}{d\epsilon}\mu(a+\epsilon v,b)\big|_{\epsilon=0}\Big)=
\theta\Big(\frac{d}{d\epsilon}(a+\epsilon v)\big|_{\epsilon=0}\Big)=\theta_a(v),\]
for all $(a,b)\in U_{\mu}$ and all $v\in A_{\sigma(a)}$.
\end{lemma}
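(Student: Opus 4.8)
The plan is to adapt Step 2 of the proof of Theorem \ref{thm:integr_morph}, the key tool being the uniqueness of $A$-path variations from Lemma \ref{Lemma_CF}. Fix $(a,b)\in U_\mu$ and $v\in A_{\sigma(a)}$, choose a small interval $J$ around $0$, and consider the two-parameter family
\[\Gamma:[0,1]\times J\rmap A_{\sigma(b)}, \qquad \Gamma(t,\epsilon):=\mu\big(t(a+\epsilon v),b\big)=k(t,a+\epsilon v,b),\]
where $J$ is shrunk so that $\Gamma$ is defined on all of $[0,1]\times J$ and the second equality is Lemma \ref{lema: rescaling-multiplication}. Since $\Gamma$ takes values in the single source fiber $A_{\sigma(b)}$, the map $q\circ\Gamma$ is constant, so $d\Gamma$ lands in $T^qA$ and $\theta\circ d\Gamma:T([0,1]\times J)\rmap A$ is a Lie algebroid morphism, being the composition of the morphism $d\Gamma$ with $\theta$.

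Next I would read off the two components of $\theta\circ d\Gamma$. By the defining ODE \eqref{eq:ODE} its $dt$-component is $\theta(\partial_t\Gamma)=\phi_V^t(a+\epsilon v)$, and since $\Gamma(0,\epsilon)=b$ is independent of $\epsilon$, its $d\epsilon$-component $\theta(\partial_\epsilon\Gamma)$ vanishes at $t=0$. Moreover $\theta\circ d\Gamma$ covers the base path $\gamma_\epsilon(t)=\tau(\Gamma(t,\epsilon))=q(\phi_V^t(a+\epsilon v))$, using the identity $\tau(k(t,a,b))=q(\phi_V^t(a))$ noted before, with $\gamma_\epsilon(0)=q(a)$ fixed. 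On the other hand, Lemma \ref{lemma:old_theta} supplies a second Lie algebroid morphism $T([0,1]\times J)\rmap A$, namely
\[\phi_V^t(a+\epsilon v)\,dt+\theta_{t(a+\epsilon v)}(tv)\,d\epsilon,\]
which (using the scaling relation \eqref{eq:flowV} to compute $\tau(t(a+\epsilon v))=q(\phi_V^t(a+\epsilon v))$) covers the same base path $\gamma_\epsilon$, has the same $dt$-component, and whose $d\epsilon$-component $\theta_{t(a+\epsilon v)}(tv)$ likewise vanishes at $t=0$.

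Both morphisms therefore realize the same $A$-path family $a_\epsilon(t):=\phi_V^t(a+\epsilon v)$ — an $A$-path family by the observation following the definition of a spray, with fixed initial point $\gamma_\epsilon(0)=q(a)$ — and both have $d\epsilon$-component vanishing at $t=0$. By the uniqueness statement in Lemma \ref{Lemma_CF}, their $d\epsilon$-components coincide identically:
\[\theta\Big(\frac{d}{d\epsilon}\mu(t(a+\epsilon v),b)\Big)=\theta_{t(a+\epsilon v)}(tv).\]
Evaluating at $t=1$, $\epsilon=0$ yields $\theta\big(\tfrac{d}{d\epsilon}\mu(a+\epsilon v,b)|_{\epsilon=0}\big)=\theta_a(v)$, which is the assertion; the middle equality in the statement is immediate since $\tfrac{d}{d\epsilon}(a+\epsilon v)|_{\epsilon=0}=v\in T^q_aA$ and $\theta_a(v)$ is its image under $\theta$. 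The only genuinely delicate point is the bookkeeping needed to feed identical data into Lemma \ref{Lemma_CF} — in particular checking that the two morphisms cover the same base path — but once this is verified, uniqueness of variations closes the argument immediately.
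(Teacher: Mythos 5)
Your proposal is correct and follows essentially the same route as the paper's proof: both identify $\mu(t(a+\epsilon v),b)$ with $k(t,a+\epsilon v,b)$, compose its differential with $\theta$ to get a Lie algebroid morphism whose $dt$-component is $\phi_V^t(a+\epsilon v)$ and whose $d\epsilon$-component vanishes at $t=0$, and then invoke the uniqueness in Lemma \ref{Lemma_CF} together with Lemma \ref{lemma:old_theta} to conclude $\theta\big(\tfrac{d}{d\epsilon}\mu(t(a+\epsilon v),b)\big)=t\,\theta_{t(a+\epsilon v)}(v)$ before setting $t=1$, $\epsilon=0$. Your extra bookkeeping about the common base path is a correct (and slightly more explicit) verification of a hypothesis the paper leaves implicit.
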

\begin{proof}
Let $J$ be a small interval around $0$ such that for all $(t,\epsilon)\in [0,1]\times J$ we have that $\mu(t(a+\epsilon v),b)\in A_{\sigma(b)}$ is defined. Then the map
\[d(\mu(t(a+\epsilon v),b))=\frac{d}{dt}\mu(t(a+\epsilon v),b)dt+\frac{d}{d\epsilon}\mu(t(a+\epsilon v),b)d\epsilon: T([0,1]\times J)\to T^{\sigma}G_V\]
is a Lie algebroid morphism. Composing this with the Lie algebroid map $\theta:T^{\sigma}G_V\dto A$, and applying (\ref{eq:theta}), we obtain that the following is a Lie algebroid morphism:
\[\phi^t_V(a+\epsilon v)dt+\theta\Big(\frac{d}{d\epsilon}\mu(t(a+\epsilon v),b)\Big)d\epsilon: T([0,1]\times J)\to A.\]
The coefficient of $d\epsilon$ vanishes for $t=0$, so, by the uniqueness property in Lemma \ref{Lemma_CF} and by Lemma \ref{lemma:old_theta}, we obtain
\[\theta\Big(\frac{d}{d\epsilon}\mu(t(a+\epsilon v),b)\Big)=t\theta_{t(a+\epsilon v)}(v),\]
which, for $\epsilon=0$ and $t=1$, gives the conclusion.
\end{proof}

Although in Lemma \ref{lemma:right inv} we used paths of the particular form $\epsilon\mapsto (a+\epsilon v)$, the result is about the differential of right multiplication $a\mapsto \mu(a,b)$, which is defined on an open set around $\tau(b)$ in $A_{\tau(b)}$. Therefore, we can reformulate the lemma as follows: for any smooth curve $\epsilon\mapsto a_\epsilon$, such that $(a_{\epsilon},b)\in U_{\mu}$, we have that
\[\theta\Big(\frac{d}{d\epsilon}\mu(a_{\epsilon},b)\Big)=\theta\Big(\frac{d}{d\epsilon}a_{\epsilon}\Big).\]
This implies that:
\begin{lemma}
The map $\mu$ is associative on a neighborhood of $M$ in $G_V^{(3)}$.
\end{lemma}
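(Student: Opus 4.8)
The plan is to deduce associativity from uniqueness of solutions of the defining ODE \eqref{eq:ODE}, following verbatim the template used in Step 3 of the proof of Theorem \ref{thm:integr_morph}. I would fix a composable triple $(a,b,c)$ close enough to $M$ so that every multiplication below is defined for $t\in[0,1]$, and then compare the two $t$-parametrized curves
\[t\mapsto \mu\big(\mu(ta,b),c\big)\qquad\text{and}\qquad t\mapsto \mu\big(ta,\mu(b,c)\big),\]
both of which take values in the single source fibre $A_{\sigma(c)}$ (using $\sigma(\mu(a,b))=\sigma(b)$ and $\sigma(\mu(b,c))=\sigma(c)$). Evaluating at $t=1$ gives exactly the associativity identity $\mu(\mu(a,b),c)=\mu(a,\mu(b,c))$.

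First I would check that the two curves agree at $t=0$. Since the rescaled element $0\cdot a$ is the unit $\sigma(a)=\tau(b)$, the unit axioms $\mu(\tau(b),b)=b$ and $\mu(\tau(g),g)=g$ give $\mu(\mu(0\cdot a,b),c)=\mu(b,c)$ and, using $\tau(\mu(b,c))=\tau(b)=\sigma(a)$, also $\mu(0\cdot a,\mu(b,c))=\mu(b,c)$; so both curves start at $\mu(b,c)$. Next I would show both satisfy the same ODE in $t$, namely $\theta(\tfrac{d}{dt}k_t)=\phi_V^t(a)$. For the second curve this is precisely \eqref{eq:theta} applied with second argument $\mu(b,c)$. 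For the first curve I would set $a_t:=\mu(ta,b)$ and invoke the reformulation of Lemma \ref{lemma:right inv} (right invariance for an arbitrary curve in the first slot), which gives $\theta(\tfrac{d}{dt}\mu(a_t,c))=\theta(\tfrac{d}{dt}a_t)$, and then $\theta(\tfrac{d}{dt}a_t)=\phi_V^t(a)$ by \eqref{eq:theta} once more.

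Since $\theta$ is a fibrewise linear isomorphism on a neighbourhood of $M$, the ODE \eqref{eq:ODE} with initial value $\mu(b,c)$ has a unique solution; hence the two curves coincide for all $t$, and setting $t=1$ yields associativity. I expect the only genuine subtlety to be the bookkeeping of domains: one must shrink the neighbourhood of $M$ in $G_V^{(3)}$ so that, for the chosen triple, the curve $t\mapsto a_t=\mu(ta,b)$ stays in the domain of right multiplication by $c$ and the solutions of \eqref{eq:ODE} persist on all of $[0,1]$. This is handled exactly as in the construction of $U_\mu$: the triple $(x,x,x)$ produces constant curves defined for all $t$, so by continuity a neighbourhood on which all compositions are defined up to $t=1$ exists, and no analytic input beyond the properties already established is required.
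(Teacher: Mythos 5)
Your proposal is correct and follows essentially the same route as the paper's proof: both compare the curves $t\mapsto\mu(\mu(ta,b),c)$ and $t\mapsto\mu(ta,\mu(b,c))$, show via the reformulated right-invariance of $\theta$ (Lemma \ref{lemma:right inv}) and equation \eqref{eq:theta} that each satisfies $\theta(\tfrac{d}{dt}k_t)=\phi_V^t(a)$ with initial value $\mu(b,c)$, and conclude by uniqueness of ODE solutions. Your additional remarks on the starting point and on shrinking the domain are just the details the paper leaves implicit.
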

\begin{proof}
Consider $(a,b,c)\in G_V^{(3)}$, close enough to $M$ such that $\mu(\mu(ta,b),c)$ is defined for all $t\in [0,1]$. By right invariance of $\theta$, and by \eqref{eq:theta}, we have that
\[\theta\Big(\frac{d}{dt}\mu(\mu(ta,b),c)\Big)=\theta\Big(\frac{d}{dt}\mu(ta,b)\Big)=\phi^t_V(a).\]
Hence the two curves $t\mapsto \mu(\mu(ta,b),c)$ and $t\mapsto \mu(ta,\mu(b,c))$ satisfy the same ODE and start at $\mu(b,c)$; thus they are equal. For $t=1$, we obtain associativity: $\mu(\mu(a,b),c)=\mu(a,\mu(b,c))$.
\end{proof}

Finally, we check the law of inverses. To that end, we first prove the following
\begin{lemma}\label{lma:magic}
Given $t,s \in \R$, the identity \[  \mu(t\phi_V^s(a),sa)=(t+s)a \]
holds for all $a\in A$ close enough to $M\subset A$.
\end{lemma}
\begin{proof}
The idea is to show that both curves $t \mapsto \mu(t\phi_V^s(a),sa)$ and $t \mapsto (t+s)a$ satisfy the same ODE and have the same initial condition at $t=0$ so that they must agree. (Notice that for the first curve to be defined, we need $a$ close to $M\subset A$.) Taking their velocities and composing with $\theta$ we get:
\[\theta(\frac{d}{dt}(t+s)a)=\theta(\frac{d}{du}ua|_{u=t+s})=\phi_V^{t+s}(a)\] 
and, by \eqref{eq:theta},
\[\theta\left(\frac{d}{dt}\mu(t\phi_V^s(a),sa)\right)=\phi_V^t(\phi_V^s(a))=\phi^{t+s}_V(a).\]
Taking $a$ close enough to $M\subset A$ so that the curves lie in the domain in which $\theta$ is a fiberwise isomorphism, we conclude that both curves satisfy the same ODE \eqref{eq:ODE}. Since both curves also start at the same point $sa\in A$ for $t=0$, the lemma follows.
\end{proof}

Coming back to the law of inverses, 
since $\iota|_M=\mathrm{id}_M$, then $(a,\iota(a)),(\iota(a),a)\in U_{\mu}$ for all $a$ close enough to $M$. Moreover, we can take such an $a$ close enough to $M\subset A$ so that Lemma \ref{lma:magic} with $t=-1$ and $s=1$ also applies yielding
\[ \mu(\iota(a),a) = \mu(-\phi_V^1(a),a) = 0 \cdot a = \sigma(a).\]
Analogously, considering $s=-1$, $t=1$ in the lemma above and denoting $a=\phi^1_V(b)$,
\[ \mu( b, \iota(b)) = \mu(\phi_V^{-1}\phi_V^1(b),-\phi_V^1(b)) = \mu(\phi_V^{-1}(a),-a) = 0 \cdot a = \tau(b).\]
The law of inverses is thus proven.

We conclude that $\mu$ endows $G_V$ with the structure of a local groupoid. By Lemma \ref{lemma:right inv}, and by the fact that $\theta_x=\mathrm{id}_{A_x}$ for all $x\in M$, we have that $\theta$ is the Maurer-Cartan form of $G_V$. Thus, we have proven Theorem \ref{theorem:thm2}.

\begin{remark}\label{rmk:tubstronspray}
 Notice that the spray groupoid $G_V\rightrightarrows M$ just defined carries the natural tubular structure $\varphi$ (in the sense of Section \ref{subsec:tubular}) given by the identity map of $G_V = A$.
\end{remark}

\begin{remark}
Let us consider the spray groupoid $G=G_V$ and, in the context of Remark \ref{rmk:factorizatgd}, the following diagram of local Lie groupoid maps
 \begin{equation}\label{eq:division_diagram_spray}
\xymatrixrowsep{0.4cm}
\xymatrixcolsep{1.2cm}
\xymatrix{
 G\tensor[_\sigma]{\times}{_\sigma}G \ar@{^{(}->}[r]\ar@<-.5ex>[d] \ar@<.5ex>[d] & \sigma^!G\ar@{-->}[r]^{\Ad_{\bar\beta}}\ar@<-.5ex>[d] \ar@<.5ex>[d] & \sigma^!G\ar@<-.5ex>[d] \ar@<.5ex>[d]\ar[r]^{\mathrm{pr}_2}& G\ar@<-.5ex>[d] \ar@<.5ex>[d] \\
 G\ar[r]^{\mathrm{id}}&G\ar@{-->}[r]^{-\iota}&G \ar[r]^\sigma & M}
 \end{equation}
where $\bar \beta(a)=(-\iota(a),a,a)=(\phi_V^1(a),a,a)$ defines a bisection of $\sigma^!G$. The above diagram is a slight modification of \eqref{eq:division_diagram} in which $\beta$ is replaced by $\bar \beta$ but the overall composition still yields the division map \eqref{division_map}. Moreover, \eqref{eq:division_diagram_spray} integrates the diagram of Lie algebroid morphisms \eqref{defi:MC_diagram} thus providing a conceptual explanation for the construction of $\theta$. To see this, the only non-trivial verification needed is that $\Ad_{\bar \beta}$ integrates $\phi^1_{\widehat V}$, having in mind that $q^!A$ is the Lie algebroid of $\sigma^!G$ via the identification
\begin{equation}\label{eq:identpull} T_bA\times_{TM}A \ni (u,a) \mapsto (u,a,0_b) \in T_{(b,\sigma(a),b)}(G _\sigma\times_\tau G _\sigma\times_{\sigma} G).\end{equation} By the general formula \eqref{eq:bisections} for differentiating bisections of flows, it is in turn enough to check that $\bar \beta( a)$ coincides with the flow up to time $t=1$ of the right-invariant vector field $\widehat V ^R \in \X(\sigma^!G)$, which is induced by the section $\widehat V$ of $q^!A$, when applied to the identity element $(a, \sigma(a) ,a)\in \sigma^!G$ defined by $a\in G$. Writing $c(t) = (\phi^t_V(a),ta,a)$, Lemma \ref{lma:magic} implies that \[ c(t+s) = (\phi_V^{t+s}(a), s\phi^t_V(a), \phi^t_V(a)) \cdot c(t),\]
where the $\cdot$ denotes multiplication in $\sigma^!G$ (c.f.\ Remark \ref{rmk:factorizatgd}). It then follows that $c(t)$ is the integral curve of $\widehat V ^R$ starting at $c(0)=(a,\sigma(a),a)$:
\[ \frac{d}{ds}|_{s=0} c(t+s) =  \frac{d}{ds}|_{s=0}  (\phi_V^{s}(\phi^t_V(a)), s\phi^t_V(a), \phi^t_V(a) ) \cdot c(t) = \widehat V ^R|_{c(t)},\]
since $\widehat V ^R|_{(b,\sigma(b),b)}= \frac{d}{ds}|_{s=0}(\phi_V^s(b),sb,b) \in T_{(b,\sigma(b),b)} \sigma^!G$ by \eqref{eq:identpull}. Thus $c(1) = (\phi^1_V(a),a,a) = \bar \beta(a)$ as desired.
\end{remark}

\begin{example}\label{ex:Lie_groups}
 Let $A=\g$ be a Lie algebra, and let us choose the zero spray $V=0$ as in Examples \ref{ex:alg}, \ref{ex:lie2}. Using the specific formula for the realization form $\theta$ computed in \eqref{eq:thetaonLie}, the ODE \eqref{eq:ODE} defining $\mu$ on the spray group $G_V=\g$ becomes
  \[\frac{d}{dt}k_t=\frac{-\ad_{k_t}}{e^{-\ad_{k_t}}-I}(a),\ \ k_0=b.\]
  It is a standard computation to deduce the Baker-Campbell-Hausdorff series (or Dynkin's formula) from the above differential equation 
$$\mu(a,b)=k_1= a + b -\frac{1}{2} [a,b] + ...
  $$ 
See \cite[Section 1.7]{DK} where the bracket appears replaced by its opposite due to the difference in conventions recalled in Example \ref{ex:lie2}.
\end{example}

\subsection{Local integration of algebroid morphisms to the spray groupoid}

Remark \ref{rmk:tubstronspray}, Theorem \ref{thm:integr_morph} and equation \eqref{eq:theta} imply the following:

\begin{corollary}\label{corollary:morphisms}
Let $f:A_1\to A_2$ be a Lie algebroid morphism covering $f_M:M_1\to M_2$. Let $G_V\rightrightarrows M_1$ be the spray groupoid corresponding to a spray $V$ for $A_1$, and let $G_2\rightrightarrows M_2$ be any local Lie groupoid integrating $A_2$. The local Lie groupoid map
\[F:G_V\dto G_2\]
integrating $f$ from Theorem \ref{thm:integr_morph} is given by $F(a)=k(1,a)$,
where, for $a\in G_V$ close enough to $M_1$,
$t\mapsto k_t=k(t,a)$ is the solution of the ODE
\begin{equation}\label{eq:morphisms}
\theta_{G_2}\Big(\frac{d}{dt}k_t\Big)=f(\phi_V^t(a)), \ \ \ k_t\in\sigma_2^{-1}(f_{M}(\sigma_1(a))),\ \ \  k_0=f_{M}(\sigma_1(a))
\end{equation}
\end{corollary}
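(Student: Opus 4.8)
The strategy is to directly apply Theorem \ref{thm:integr_morph} and then simplify the defining ODE \eqref{eq:ODE_maps} by exploiting the specific tubular structure of the spray groupoid. Recall from Remark \ref{rmk:tubstronspray} that $G_V = A_1$ carries the tubular structure $\varphi = \mathrm{id}_{A_1}$, so the scalar multiplication along source fibers is simply $t \cdot a = ta$, ordinary fiberwise scaling on the vector bundle $A_1 \to M_1$. With this tubular structure fixed, Theorem \ref{thm:integr_morph} applies and produces a local groupoid map $F: G_V \dto G_2$ integrating $f$, given by $F(a) = k_{t=1}$, where $t \mapsto k_t$ solves the ODE \eqref{eq:ODE_maps}:
\[
\theta_{G_2}\Big(\frac{d}{dt}k_t\Big) = f\circ \theta_{G_1}\Big(\frac{d}{dt}(tg)\Big), \quad k_0 = f_B(\sigma_1(g)),
\]
with $G_1 = G_V$ and $\theta_{G_1} = \theta$, the realization form.

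The single substantive step is to rewrite the right-hand side. By Theorem \ref{theorem:thm2}, the Maurer-Cartan form of $G_V$ is exactly the realization form $\theta$. Applying this to the tubular scaling curve $t \mapsto ta$ and using the key identity \eqref{eq:theta}, namely $\theta\big(\frac{d}{dt}(ta)\big) = \phi_V^t(a)$, I obtain
\[
f\circ \theta\Big(\frac{d}{dt}(ta)\Big) = f\big(\phi_V^t(a)\big).
\]
Substituting this into \eqref{eq:ODE_maps} gives precisely the ODE \eqref{eq:morphisms} of the corollary, with the same initial condition $k_0 = y = f_B(\sigma_1(a))$ and the same source-fiber constraint $k_t \in \sigma_2^{-1}(y)$ (which is just the statement that $k_t$ lives in the appropriate $\sigma_2$-fiber, as in Theorem \ref{thm:integr_morph}). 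Hence $F(a) = k(1,a)$, as claimed.

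There is no real obstacle here; the corollary is a transcription of Theorem \ref{thm:integr_morph} into the coordinates natural to a spray groupoid. The only point requiring a moment's care is the identification of the tubular scalar multiplication with ordinary fiber scaling: one must confirm that the tubular structure $\varphi = \mathrm{id}$ satisfies the normalization $\frac{d}{dt}\varphi(ta)\big|_{t=0} = (0,a)$ demanded in Subsection \ref{subsec:tubular}, so that the induced operation $t \cdot g = \varphi(t\varphi^{-1}(g))$ coincides with $ta$. This is immediate since $\varphi$ is the identity and the normal bundle of $M$ in $G_V = A_1$ is canonically $A_1$ itself. With that verified, equation \eqref{eq:theta} does all the work and the proof is a two-line substitution.
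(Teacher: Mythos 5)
Your proposal is correct and is precisely the argument the paper intends: the corollary is stated as an immediate consequence of Remark \ref{rmk:tubstronspray} (the identity tubular structure on $G_V$), Theorem \ref{thm:integr_morph}, and equation \eqref{eq:theta}, which is exactly the substitution you carry out. Nothing to add.
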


\begin{example}\label{ex:trivint}
 Let $f:A_1 \to A_2$ be a Lie algebroid morphism, and assume that there are sprays $V_1$ and $V_2$, on $A_1$ and $A_2$ respectively, such that they are $f$-related. In this case, it follows from equation (\ref{eq:theta_Euler2}) that $k(t,a) = t f(a)$ is a solution to equation equation (\ref{eq:morphisms}); therefore $F:=f:G_{V_1}\dto G_{V_2}$ is the local groupoid map integrating $f$. 

However, let us remark that for a general Lie algebroid map, related sprays might not exist; for example, one can check that this is the case for $f=dg:T\R\to T\R$, where $g(x)=x^2$.
\end{example}

\begin{example}\emph{(Integrating $1$-cocycles)}\label{rmk:cocycles}
Let $f:A\to \R$ be a Lie algebroid morphism, where $\R$ is a trivial Lie algebra, viewed as a Lie algebroid over a point. Note that $f$ represents a $1$-cocycle for $A$. In this case, equation \eqref{eq:morphisms} reduces to 
 $$ \frac{d}{dt} k_t = f(\phi_V^t(a)), \ k_0 = 0.$$
Hence, the local groupoid map $F:G_V \dto \R$ ($\R$ seen as a $1$-dimensional Lie group) is given by
 $$ F(a) = \int_0^1 f(\phi_V^t(a)) \ dt,$$
 which defines a local $1$-cocycle on $G_V$ integrating $f$. Compare with the inverse \eqref{eq:our_formula} of the van Est map at the level of 1-cochains: for the spray groupoid $\theta(\frac{d}{dt}ta)=\phi^t_V(a),$ hence the two formulas coincide. Similar formulas hold for multiplicative forms and other multiplicative geometric structures near the identities; these will be detailed elsewhere \cite{CMSbis}.
\end{example}

For $f=\mathrm{id}$, the induced map $F$ is called the {\it exponential map} corresponding to the spray:

\begin{definition}\label{defi:exp}
Let $A$ be a Lie algebroid with a spray $V$. Assume that $G$ is a local Lie groupoid integrating $A$ with Maurer-Cartan form $\theta_G$. For $a\in A_x$, close enough to $M$, the ODE
\[\theta_G\Big(\frac{d}{dt}k_t\Big)=\phi^t_V(a), \ \ k_0=x,\]
has a solution $t\mapsto k_t=k(t,a)\in \sigma^{-1}(x)$, for all $t\in [0,1]$. The \textbf{spray exponential} is the map
\[\exp_V:A\dto G, \ \ \exp_V(a):=k(1,a).\]
\end{definition}

Note that $\exp_V$ defines a diffeomorphism near $M$ (its differential along $M$ is the identity). The results of this section imply the following characterization of the spray groupoid structure:

\begin{corollary}\label{coro:exp}
Let $A$ be a Lie algebroid with a spray $V$. The germ of the spray groupoid structure on $(A,M,q)$ is uniquely characterized by the property that, for any local groupoid $G$ integrating $A$, the associated spray exponential $\exp_V:G_V\dto G$ defines a local Lie groupoid map (in this case, the germ of $\exp_V$ automatically defines an isomorphism between the germs of $G_V$ and $G$).
\end{corollary}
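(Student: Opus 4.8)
\textbf{Proof plan for Corollary \ref{coro:exp}.}

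The plan is to prove the two assertions in turn: first, that for any local groupoid $G$ integrating $A$, the spray exponential $\exp_V:G_V\dto G$ is a local Lie groupoid map and an isomorphism of germs; and second, that this property characterizes the germ of the spray groupoid structure uniquely. For the first assertion, I would apply Corollary \ref{corollary:morphisms} with $A_1=A_2=A$, $f=\mathrm{id}_A$, $G_1=G_V$ and $G_2=G$. The defining ODE \eqref{eq:morphisms} for the integration of $\mathrm{id}_A$ reads $\theta_G(\tfrac{d}{dt}k_t)=\phi_V^t(a)$ with $k_0=\sigma(a)$, which is exactly the ODE defining $\exp_V$ in Definition \ref{defi:exp}. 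Hence Corollary \ref{corollary:morphisms} immediately tells us that $\exp_V$ is a local Lie groupoid map integrating $\mathrm{id}_A$. Because $F_A=dF|_A=\mathrm{id}_A$ and $F$ restricts to $\mathrm{id}_M$ on units, the differential of $\exp_V$ along $M$ is the identity on $TG|_M=TM\oplus A$; therefore $\exp_V$ is a local diffeomorphism near $M$, and a local Lie groupoid map which is a diffeomorphism of germs is automatically an isomorphism of germs (its inverse is smooth and multiplicative around the units). This gives the parenthetical claim.

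For the uniqueness/characterization assertion, suppose $(A,M,q)$ is endowed with \emph{some} germ of a local Lie groupoid structure, call it $\Gamma$, such that for every local groupoid $G$ integrating $A$ the spray exponential $\exp_V:\Gamma\dto G$ is a local Lie groupoid map. I would show that $\Gamma$ must coincide with the spray groupoid germ $G_V$. Since $G_V$ itself is a local groupoid integrating $A$ (Theorem \ref{theorem:thm2}), we may take $G=G_V$ in the hypothesis, so $\exp_V:\Gamma\dto G_V$ is a local groupoid map. Its induced algebroid map is $\mathrm{id}_A$ (the spray exponential always differentiates to the identity along $M$, by the same computation as above). By Corollary \ref{corollary: all the same}, any two local groupoids integrating $A$ have isomorphic germs, so $\Gamma$ and $G_V$ have isomorphic germs; the content of the characterization is that $\exp_V$ realizes this isomorphism and pins down the germ concretely. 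The key point is the uniqueness clause of Theorem \ref{thm:integr_morph}: the germ of an integration of a given algebroid morphism is unique. Applied to $\mathrm{id}_A$ with domain $\Gamma$ and target $G_V$, this forces the germ of $\exp_V$ to be the unique integration; combined with the fact that it is an isomorphism, it identifies $\Gamma$ with $G_V$ up to the canonical isomorphism.

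I expect the main obstacle to be making precise the sense in which the property ``$\exp_V$ is a groupoid map for all $G$'' characterizes the germ, rather than merely asserting uniqueness up to isomorphism. The subtlety is that $\exp_V$ is defined relative to a \emph{fixed} tubular structure on the domain — namely the canonical one on $G_V=A$ given by the identity (Remark \ref{rmk:tubstronspray}) — and the characterization should be read as identifying the spray groupoid structure among all local groupoid structures on $(A,M,q)$ compatible with this tubular structure. I would therefore be careful to invoke Theorem \ref{thm:integr_morph} with the identity tubular structure on $\Gamma$, so that the explicit ODE for $\exp_V$ is the one appearing in Definition \ref{defi:exp}, and then argue that the uniqueness of the integrating germ, together with the normalization $\exp_V|_M=\mathrm{id}_M$ and $d\exp_V|_A=\mathrm{id}_A$, leaves no freedom in the germ of the multiplication. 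The remaining verifications — that the induced algebroid map is the identity, and that an isomorphism of germs at the level of maps gives an isomorphism of germs at the level of structures — are routine consequences of the results already established in Sections \ref{sec:local} and \ref{sec:spray}.
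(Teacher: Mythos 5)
Your first half is correct and is exactly the paper's intended route: Corollary \ref{corollary:morphisms} with $f=\mathrm{id}_A$ identifies the ODE \eqref{eq:morphisms} with the one in Definition \ref{defi:exp}, so $\exp_V$ integrates $\mathrm{id}_A$, and since $d\exp_V|_{M}$ is the identity it is a diffeomorphism near $M$ whose inverse is again multiplicative, hence an isomorphism of germs. One small correction of framing: $\exp_V$ as defined in Definition \ref{defi:exp} does not depend on a tubular structure on the domain at all --- it is the map $F$ of Theorem \ref{thm:integr_morph} that does, and equation \eqref{eq:theta} is what identifies that $F$ (for the identity tubular structure of Remark \ref{rmk:tubstronspray}) with the ODE \eqref{eq:morphisms}. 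So the worry you raise about which tubular structure to use is already discharged by Corollary \ref{corollary:morphisms}.

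In the uniqueness half there is a genuine gap at the last step. You correctly reduce to: if $\Gamma$ is a local groupoid structure on $(A,M,q)$ with the stated property, then taking $G=G_V$ gives a local groupoid map $\exp_V:\Gamma\dto G_V$ which is an isomorphism of germs, restricts to $\mathrm{id}_M$ on units, and has $d\exp_V|_A=\mathrm{id}_A$. But from this you conclude that the normalization ``leaves no freedom in the germ of the multiplication,'' which does not follow: a germ of diffeomorphism of $A$ fixing $M$ with identity differential along $M$ need not be the identity, and transporting $\mu_{G_V}$ along such a diffeomorphism can produce a genuinely different multiplication germ. So your argument as written only yields that $\Gamma$ and $G_V$ have \emph{isomorphic} germs (which is just Corollary \ref{corollary: all the same}), not equal ones. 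The missing observation is that when the target is $G_V$ itself, the defining ODE of $\exp_V$ reads $\theta(\frac{d}{dt}k_t)=\phi_V^t(a)$, $k_0=q(a)$, and by equation \eqref{eq:theta_Euler2} the curve $k_t=ta$ is its solution; hence $\exp_V=\mathrm{id}_A$ \emph{on the nose}, not merely to first order. Multiplicativity of the identity map $\Gamma\dto G_V$ then forces $\tau_\Gamma$, $\mu_\Gamma$, $\iota_\Gamma$ to agree with those of $G_V$ near $M$, i.e.\ equality of germs. This is precisely the computation the paper makes explicit in the proof of Proposition \ref{prop:uniqspraystruct} (``Therefore, $\exp_V=\mathrm{id}_A$''), so the repair is one line; but without it the characterization claim is not proved.
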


\begin{remark}
 In the case that $A$ is integrable, i.e.\ if the Weinstein groupoid $G(A)=P(A)/\mathcal{F}$ is smooth 
(see \cite{CF1}), then the exponential map associated to $G(A)$ and $V$ factors as $\exp_V=\pi\circ\hexp$, where $\hexp:A\dto P(A)$ was described in equation \eqref{eq:hatexp} of the  introduction, and $\pi:P(A)\to G(A)$ is the quotient map.
\end{remark}

\subsection{Tubular structures versus spray exponentials}\label{subsection:tubular groupoids}

In this final subsection, we show that a tubular structure $\varphi: A \dto G$ naturally induces a Lie algebroid spray $V_\varphi$ on $A$. The resulting local Lie groupoid $G_{V_{\varphi}}$ can be thought of as providing a ``spray groupoid approximation'' to $G$. We provide necessary and sufficient conditions for $\varphi:G_{V_{\varphi}} \dto G$ to be a local Lie groupoid map.

Let $G\rightrightarrows M$ be a local Lie groupoid with Lie algebroid $A$, and $\varphi: A \dto G$ a tubular structure as defined in Section \ref{subsec:tubular}. Following the case of a spray groupoid, in which the identity $A \to G_V$ defines a tubular structure, we consider 
\begin{equation}\label{eq:structure equation}
\lambda_t:A\dto A, \ \ \lambda_t(a):=\theta_G\Big(\frac{d}{dt}\varphi(ta)\Big).
\end{equation}
The map $(t,a)\mapsto \lambda_t(a)$ is defined on an open subset of $\R\times A$ containing both $\R\times M$ and $\{0\}\times A$. Since $\varphi$ is a tubular structure on $G$, we have that $\lambda_0=\mathrm{id}$. 
\begin{lemma}\label{lma:sprayfromtubular}
The derivative of $\lambda_t$ at $t=0$
\[ V_\varphi \in \mathfrak{X}(A), \ \ V_\varphi(a):=\frac{d}{dt}\lambda_t(a)\big|_{t=0}\]
defines a Lie algebroid spray for $A$.
\end{lemma}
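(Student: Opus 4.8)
The plan is to verify directly that the vector field $V$ defined by $V_g = \frac{d}{dt}\lambda_t(g)|_{t=0}$ satisfies the two defining axioms of a Lie algebroid spray, namely that it is homogeneous of degree one (equivalently, its flow satisfies $\phi_V^t(sg) = s\,\phi_V^{st}(g)$ as in \eqref{eq:flowV}), and that it lifts the anchor, $dq(V_g) = \rho(q(g))$ evaluated appropriately, i.e.\ $dq(V_g) = \rho(g)$ using the identification $G = A$. Since $V$ is the infinitesimal generator of the one-parameter family $\lambda_t$, the key is to unwind the definition \eqref{eq:structure equation} and extract the needed structural identities from the groupoid axioms and the normalization \eqref{eq:yaeq}.

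First I would establish the anchor-lifting property. By construction $\lambda_t(g) = \theta_G(\frac{d}{dt}tg) \in A_{\tau(tg)}$, so $\lambda_t(g)$ lives in the fiber of $A$ over the target of $tg$. Differentiating at $t=0$ and using that $\theta_G$ covers $\tau$ together with the fact that $\frac{d}{dt}\tau(tg)|_{t=0} = \rho(g)$ (which follows from $\rho = d\tau|_A$ applied to the tubular tangent vector $\frac{d}{dt}(tg)|_{t=0} = g \in A$), one reads off that $V_g$ projects under $dq$ to $\rho(g)$. This is the content of the second spray axiom.

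The more delicate point, and the one I expect to be the main obstacle, is the homogeneity. The natural approach is to prove the flow identity \eqref{eq:flowV} directly by a uniqueness-of-ODE argument rather than checking $m_t^*V = tV$ by hand. Concretely, I would fix $s$ and compare the two curves $t \mapsto \phi_V^t(sg)$ and $t \mapsto s\,\phi_V^{st}(g)$, showing they solve the same ODE with the same initial condition. The essential identity needed is a rescaling relation for $\lambda$, of the form $\lambda_{st}(g) = \lambda_t(sg)$ or its infinitesimal counterpart, which must be derived from \eqref{eq:structure equation} using the definition of the scalar multiplication $sg = \varphi(s\varphi^{-1}(g))$ coming from the tubular structure and the chain rule $\frac{d}{dt}(t \cdot sg) = \frac{d}{dt}((ts)g)$. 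The subtlety is that $\theta_G$ and the scalar multiplication interact only through the specific geometry of the tubular structure, so one must carefully track how $\frac{d}{dt}(t(sg))$ relates to $\frac{d}{du}(ug)|_{u=ts}$; I expect this to reduce, after applying the right invariance of $\theta_G$, to the desired homogeneity, mirroring the computation in Lemma \ref{lema: rescaling-multiplication}.

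Once both axioms are in hand, the proof concludes: $V$ is homogeneous of degree one and lifts the anchor, hence is a spray for $A$ by definition. I would also note in passing that the vanishing of $V$ along the zero section $M$ (required implicitly by homogeneity, cf.\ Remark \ref{rem:connections}) follows from $\lambda_t(x) = x$ for $x \in M$, which holds because the constant curve represents the scalar multiplication on units and $\theta_{G,x} = \mathrm{id}$.
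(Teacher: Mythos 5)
Your plan is structurally the paper's proof: the anchor condition comes from the fact that $\theta_G$ covers $\tau$, and homogeneity comes from a rescaling identity for $\lambda$. Your argument for $dq(V_g)=\rho(g)$ is correct and is a slightly more pedestrian version of the paper's, which observes that $t\mapsto \lambda_t(g)$ is an $A$-path (being the composition of the Lie algebroid morphisms $d(tg):T[0,1]\to T^qA$ and $\theta_G:T^qA\dto A$) and evaluates the $A$-path condition at $t=0$; both routes work.

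For homogeneity there are two points to repair. First, the correct rescaling identity is $\lambda_t(sg)=s\,\lambda_{st}(g)$, not $\lambda_{st}(g)=\lambda_t(sg)$: writing $t(sg)=(ts)g$ and applying the chain rule gives $\frac{d}{dt}\big((st)g\big)=s\,\frac{d}{du}(ug)\big|_{u=st}$, and the factor $s$ passes through $\theta_G$ by its fibrewise linearity (right invariance of $\theta_G$ plays no role here). That factor is essential: with it, the identity reads $m_s^{-1}\circ\lambda_t\circ m_s=\lambda_{st}$, and differentiating at $t=0$ yields exactly $m_s^*(V)=sV$; without it the two sides do not even live in matching tangent spaces once you differentiate. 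Second, your proposed detour through the flow identity \eqref{eq:flowV} by ODE uniqueness is circular: to see that $t\mapsto\phi_V^t(sg)$ and $t\mapsto s\,\phi_V^{st}(g)$ satisfy the same ODE you must already know how $V$ transforms under $m_s$, which is the homogeneity you are trying to prove, and you cannot substitute $\lambda_t$ for $\phi_V^t$ since the two coincide only for spray groupoids (that is precisely Proposition \ref{prop:uniqspraystruct}). The fix is to prove the infinitesimal condition $m_s^*(V)=sV$ directly from the rescaling identity as above; the flow identity is then a standard consequence. A last detail worth recording, which the paper notes: $V$ is defined on all of $G$, not merely near $M$, because $\lambda$ is defined on a neighbourhood of $\{0\}\times G$.
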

\begin{proof}
First, note that $\lambda_t(a)$ is an $A$-path: this is equivalent to $\lambda_t(a) dt:T[0,1]\to A$ being a Lie algebroid map, which follows since it is the composition of the Lie algebroid maps \[\theta_G:T^\s G \dto A\ \ \textrm{ and}\ \ \frac{d}{dt}( \varphi(t a)) \  dt:T[0,1]\to T^\s G.\] 
Using $\s \circ \varphi = q$, the $A$-path condition at $t=0$ gives the second spray condition: $dq (V_\varphi(a))=\rho(a)$.\\
Next, note that $\lambda_t(a)$ satisfies the analog of equation (\ref{eq:flowV}):
\[\lambda_t(sa)=\theta_G\Big(\frac{d}{dt}\varphi(sta)\Big)=s\theta_G\Big(\frac{d}{du}\varphi(ua)\big|_{u=st}\Big)=s\lambda_{st}(a).\]
This implies that
$m_{s}^{-1}\circ \lambda_{t}\circ m_s=\lambda_{st}.$
Taking the derivative at $t=0$, we obtain the first spray condition $m_s^*(V_\varphi)=sV_\varphi$.
\end{proof}

Let $\exp_{V_\varphi} : A \dto G$ be the spray exponential associated to $V_\varphi$ (cf.\ Definition \ref{defi:exp}). The following proposition states criteria for $\varphi:G_{V_{\varphi}}\dto G$ to be a local groupoid map.

\begin{proposition}\label{prop:uniqspraystruct}
With the above notation, the following are equivalent:
\begin{enumerate}
\item $\varphi:G_{V_{\varphi}}\dto G$ is a local groupoid map;
\item $\exp_{V_\varphi}=\varphi$ near the zero section;
\item the isotopy $\lambda_t$ coincides with the flow of $V_\varphi$ near the zero section.  
\end{enumerate}
\end{proposition}

\begin{proof}
By Corollary \ref{coro:exp}, $2.$ implies $1.$

Assume that $3.$ holds: $\lambda_t =\phi_{V_\varphi}^t $. By Lemma \ref{lma:sprayfromtubular}, $V_\varphi$ is a spray for $A$. On the other hand, for small enough $a\in A$, we have $\theta_G(\frac{d}{dt}\varphi(ta))=\lambda_t(a)=\phi_{V_\varphi}^t(a)$. Therefore $k_t=\varphi(ta)$ satisfies the ODE from Definition \ref{defi:exp} of the spray exponential map. Taking $t=1$, we conclude $\exp_{V_\varphi}(a) = \varphi(a)$ for any $a\in A$ close enough to the zero section, thus $2.$ holds. 

Finally, assume that $1.$ holds. Then $\varphi$ intertwines the Maurer-Cartan forms of the two groupoids, and so: 
\[ \lambda_t(a) = \theta_G\left(\frac{d}{dt}\varphi(ta)\right) = \theta\left(\frac{d}{dt}(ta)\right)= \phi_{V_{\varphi}}^t(a),\]
where we have used that $\varphi$ induces the identity at the level of Lie algebroids. Thus $3.$ holds. 
\end{proof}
%
%
%
%
%
%
%
%

\begin{appendix}
\section{Differentiating integrated cochains}\label{appendix}
In this appendix, we consider the differentiation of the local Lie groupoid cochains obtained by integration through the map $\Psi$ defined in Section \ref{sec:cochains}. The main result is Lemma \ref{lma:DvE} below which is used in the proof of item \ref{e2} of Proposition \ref{prop:ve}.

Following the notation of Section \ref{sec:cochains}, let $\alpha \in \Gamma( \wedge^p A^*)$ be an algebroid $p$-cochain and let $\Psi(\alpha): G^{(p)}\dto \R$ be the corresponding local groupoid cochain. In order to apply the van Est map, we consider sections $a_1 \comas a_k \in \Gamma(A)$ and compute the iterated differentiation $ D_{a_p}\cdots D_{a_1} \Psi(\alpha)$. We do this inductively on the number of derivatives, starting by computing $D_{a_1}\Psi(\alpha)$.
 
Let us fix composable arrows $g_2 \comas g_p \in G^{(p-1)}$ in a sufficiently small neighborhood of $M$, and consider $h_1(\e) = \e a_1(\t(g_2))$ so that
\begin{align}
\nonumber D_{a_1}\Psi(\alpha) (g_2 \comas g_p) &= \frac{d}{d\e}|_{\e=0}\Psi(\alpha)(h_1(\e),g_2 \comas g_p) \\ \nonumber &= \frac{d}{d\e}|_{\e=0}
\int_{I^p} (\theta^*\alpha)\left( \partial_{1}\gamma_{h_1(\e), g_2 \comas g_p} \comas {\partial}_{p}\gamma_{h_1(\e), g_2 \comas g_p} \right)  dt_1\dots dt_p.
\end{align}
Using the relations
\begin{equation} \label{eq:homgamma}
\gamma_{\e g_1, g_2\comas g_p}(t_1\comas t_p) = \gamma_{g_1\comas g_p}(\e t_1, t_2\comas t_p),\ \ \gamma_{\tau(g_2), g_2\comas g_p}(t_1\comas t_p) = \gamma_{g_2\comas g_p}(t_2\comas t_p),
\end{equation}
where $\tau(g_2)=\sigma(g_1)=0\cdot g_1$, it follows that, for $a\in \Gamma(A)$ and small $(g_2 \comas g_n) \in G^{(n)}$,
\begin{align}
\nonumber \partial_{1}\gamma_{\e a(\t g_2) ,g_2 \comas g_n}(t_1 \comas t_{n}) = \e \cdot \partial_{1}\gamma_{a(\t(g_2)),g_2 \comas g_n}(\e t_1 , t_2\comas t_{n}), \\ 
\partial_{j}\gamma_{\t(g_2),g_2 \comas g_n}(t_1,t_2 \comas t_n) = 
\partial_{j-1}\gamma_{g_2 \comas g_n}(t_2 \comas t_n) ,\ j>1. \label{eq:thetaids}
\end{align}

Applying \eqref{eq:thetaids}, we can compute $\frac{d}{d\e}|_{\e=0}$ (equivalently, apply the substitution $u=\e t_1$ as in Example \ref{ex:1cochains}) yielding
\begin{align}
\nonumber &D_{a_1} \Psi(\alpha) (g_2 \comas g_p) = \\ \nonumber &= 
\int_{I^{p-1}} (\theta^*\alpha)\left( \partial_{1}\gamma_{a_1(\t (g_2)),g_2 \comas g_p}(0,t_2\comas t_p), \partial_{1}\gamma_{g_2 \comas g_p}(t_2\comas t_p)  \comas {\partial}_{p-1}\gamma_{g_2 \comas g_p}(t_2\comas t_p) \right)  dt_2\dots dt_p,
\end{align}
where $\int_I dt_1 =1$ has been factored out.
Above, the first factor inside $\alpha$ is of a different nature from the rest: it is the only one depending on $a_1$ while the others only depend on the fixed string $g_2 \comas g_p$. We introduce the following general notation for such terms: given $a \in \Gamma(A)$, small $(k_1 \comas k_n) \in G^{(n)}$ and any $(s_1 \comas s_n) \in I^n$, 
\begin{equation} \label{eq:defv}
 v^{a}_{k_1 \comas k_n}(s_1\comas s_n) := \partial_{1}\gamma_{a(\t (k_1)),k_1 \comas k_n}(0,s_1\comas s_n)\in T^{\sigma}G_{\gamma_{k_1\comas k_n}(s_1 \comas s_n)}.
\end{equation}
Before stating the general inductive result, let us compute one more derivative fixing $g_3 \comas g_p \in G^{(p-2)}$ and letting $h_2(\e) = \e a_2(\t(g_3))$,
\begin{align}
\nonumber D_{a_2}&D_{a_1} \Psi(\alpha) (g_3 \comas g_p) = \frac{d}{d\e}|_{\e=0} D_{a_1} \Psi(\alpha)(h_2(\e),g_3 \comas g_p) \\
 \nonumber =& 
 \frac{d}{d\e}|_{\e=0}\int_{I^{p-1}}(\theta^* \alpha)\left(  v^{a_1}_{h_2(\e),g_3 \comas g_p}, \partial_{1}\gamma_{h_2(\e),g_3 \comas g_p}  \comas {\partial}_{p-1}\gamma_{h_2(\e),g_3 \comas g_p} \right)(t_1\comas t_{p-1})  dt_1\dots dt_{p-1},
\end{align}
where we have relabeled the integration variables. Using \eqref{eq:thetaids} we compute $\frac{d}{d\e}|_{\e=0}$, yielding the integral over $dt_1 \dots dt_{p-1}$ of
\[
(\theta^*\alpha)\left( v^{a_1}_{h_2(0),g_3 \comas g_p}(t_1\comas t_{p-1}), v^{a_2}_{g_3 \comas g_p}(t_2 \comas t_{p-1}),
\partial_{1}\gamma_{g_3 \comas g_p}(t_2\comas t_{p-1})  \comas {\partial}_{p-2}\gamma_{g_3 \comas g_p}(t_2 \comas t_{p-1}) \right)  
\]
The key step for recognizing an inductive structure in our computation is to note that (recall that $M$ is identified with the unit section) 
\begin{equation} \label{eq:0in2}
 \gamma_{g_1,\t(g_3),g_3\comas g_n}(t_1,t_2\comas t_n) = \gamma_{g_1,g_3\comas g_n}(t_1\cdot t_2, t_3\comas t_n), 
\end{equation}
for any small composable string $(g_1,g_3,g_4 \comas g_n) \in G^{(n-1)}$ and any $\ (t_1,t_2 \comas t_n) \in I^n$.
Using \eqref{eq:0in2} and the notation \eqref{eq:defv}, we get that
\begin{align} \label{eq:vaprop}
 v^{a}_{\t ( g_3 ),g_3 \comas g_n}(t_1\comas t_{n-1}) = t_1 \cdot v^{a}_{g_3 \comas g_n}(t_2\comas t_{n-1})
\end{align}
for any $a\in \Gamma(A)$, small $(g_3 \comas g_n) \in G^{(n-2)}$ and $(t_1\comas t_{n-1}) \in I^{n-1}$.
Thus,  since $h_2(0) = \t(g_3)$,
\begin{align}
\nonumber D_{a_2}D_{a_1} & \Psi(\alpha) (g_3 \comas g_p) =\\
\nonumber &= \int_{I^{p-1}} t_1 \cdot (\theta^* \alpha)\left(  v^{a_1}_{g_3 \comas g_p}, v^{a_2}_{g_3 \comas g_p} , {\partial}_{1}\gamma_{g_3 \comas g_p} \comas {\partial}_{p-2}\gamma_{g_3 \comas g_p} \right) (t_2 \comas t_{p-1})  dt_1\dots dt_{p-1}
\end{align}
from which $\int_I t_1 \ dt_1 = 1/2$ factors out yielding
\begin{align}
\nonumber D_{a_2}D_{a_1} \Psi(\alpha) (g_3 \comas g_p) =\frac{1}{2}
 \int_{I^{p-2}}(\theta^* \alpha)\left( v^{a_1}_{g_3 \comas g_p}, v^{a_2}_{g_3 \comas g_p} , {\partial}_{1}\gamma_{g_3 \comas g_p} \comas {\partial}_{p-2}\gamma_{g_3 \comas g_p} \right).
\end{align}

Continuing by induction we obtain the following:
\begin{lemma}\label{lma:DvE}
For $a_1 \comas  a_k\in \Gamma(A)$ and $(g_{k+1} \comas g_p)\in G^{(p-k)}$ small enough, we have:
\begin{align}\label{eq:Ds}
\nonumber D_{a_k}&\cdots D_{a_1} \Psi(\alpha) (g_{k+1} \comas g_p)=\\ 
 &= \frac{1}{k!} \int_{I^{p-k}} (\theta^*\alpha)\left(v^{a_1}_{g_{k+1}\comas g_p} \comas v^{a_k}_{g_{k+1}\comas g_p}, \partial_{1} \gamma_{g_{k+1}\comas g_p} \comas  \partial_{p-k} \gamma_{g_{k+1}\comas g_p}\right)
\end{align}
where $v^{a}_{g_{k+1} \comas g_p}:I^{p-k} \to T^{\sigma}G$ is defined in \eqref{eq:defv}.
\end{lemma}

\begin{proof}
We will consider an induction over $k\geq 1$ recalling that the case $k=1$ was already worked out above.
Assume now \eqref{eq:Ds} and compute
\[ D_{a_{k+1}}\cdots D_{a_1} \Psi(\alpha) (g_{k+2} \comas g_p) = \frac{d}{d\e}|_{\e=0} D_{a_k}\cdots D_{a_1} \Psi(\alpha) (\e a_{k+1}(\t(g_{k+2})),g_{k+2}\comas  g_p).\]
To use \eqref{eq:Ds} on the right hand side above, let us introduce the following notation: 
\[ w^{a_1 \comas a_k}_{g_{k+1} \comas g_{p}}: = v^{a_1}_{g_{k+1}\comas  g_p}\wedge \dots \wedge v^{a_k}_{g_{k+1}\comas g_p}\]
and $b:= a_{k+1}(\t(g_{k+2}))$, so that
\begin{equation}\label{YAE}
 k!  D_{a_{k+1}}\cdots D_{a_1} \Psi(\alpha) (g_{k+2}\comas g_p)= \frac{d}{d\e}|_{\e=0}\int_{I^{p-k}} (\theta^*\alpha)(w^{a_1\comas a_k}_{\e b, g_{k+2}\comas g_{p}} \wedge  d \gamma_{\e b, g_{k+2}\comas g_p}).
\end{equation}
Using \eqref{eq:homgamma} it follows that
\[ d\gamma_{\e b,g_{k+2}\comas g_p}(t_1\comas t_{p-k}) = \e \cdot  d\gamma _{b,g_{k+2}\comas g_p}(\e t_1\comas t_{p-k}),\]
and that $\gamma_{b,g_{k+2}\comas g_p}(0,t_2\comas t_{p-k})=\gamma_{g_{k+2}\comas g_p}(t_2\comas t_{p-k})$. Using these to compute \eqref{YAE}, gives
\begin{align*}
  k!  D_{a_{k+1}}&\cdots D_{a_1} \Psi(\alpha) (g_{k+2}\comas g_p)=\\
&=\int_{I^{p-k}} (\theta^*\alpha)\big(w^{a_1\comas a_k}_{\tau(g_{k+2}),g_{k+2}\comas g_{p}}(t_1\comas t_{p-k})\wedge d\gamma_{g_{k+2}\comas g_p}(t_2\comas t_{p-k})\big) \ dt_1\dots dt_{p-k}.
\end{align*}
Applying \eqref{eq:vaprop} we get 
\[w^{a_1\comas a_k}_{\tau(g_{k+2}),g_{k+2}\comas g_{p}}(t_1\comas t_{p-k}) = t_1^k \cdot w^{a_1\comas a_k}_{g_{k+2}\comas g_{p}}(t_2, t_3\comas t_{p-k}).\]
The lemma thus follows 
by noticing that $\int_0^1 t_1^k\ dt_1= 1/(k+1)$ factors out.
\end{proof}
\end{appendix}

\end{document}